\newenvironment{Proof}{\par\noindent{\sc Proof:}}%
                      {\hspace*{\fill}\nobreak$\Box$\par\medskip}
                       {\hspace*{\fill}\nobreak$\Box$\par\medskip}
\newtheorem{Theorem}{Theorem}[section]
\newtheorem{Proposition}[Theorem]{Proposition}
\newtheorem{Lemma}[Theorem]{Lemma}
\newtheorem{Corollary}[Theorem]{Corollary}
\theoremstyle{definition}
\newtheorem{Definition}[Theorem]{Definition}
\renewcommand{\baselinestretch}{1.1}
\begin{document}

\title[LDU Decomposition]%
{A Combinatorial Interpretation of the LDU Decomposition of Totally Positive Matrices }

\author{M.~El Gebali}
\address{American University in Cairo, Mathematics and Actuarial Science Department, AUC Avenue, New Cairo, Egypt}
\email{m.elgebali@aucegypt.edu}

\author{N.~El-Sissi}
\address{American University in Cairo, Mathematics and Actuarial Science Department, AUC Avenue, New Cairo, Egypt}
\email{nelsissi@aucegypt.edu}

\date{26th October 2015}

\begin{abstract}
  We study the combinatorial description of the LDU decomposition of totally positive matrices. We give a description of the lower triangular L, the diagonal D, and the upper triangular U matrices of the LDU decomposition of totally positive matrices in terms of the combinatorial structure of essential planar networks described by Zelvinsky and Fomin in \cite{Fomin}. Similarly, we find a combinatorial description of the inverses of these matrices. In addition, we provide recursive formulae for computing the L, D, and U matrices of a totally positive matrix.\end{abstract}

\maketitle


\section{Introduction}
\label{sec:intro}
The study of the class of totally positive matrices was initiated in the 1930s by F. R. Gantmacher and M. G. Krein~\cite{GantKrein}. Also, an extensive study of totally positive matrices is covered in S. Karlin's book~\cite{Karlin}. Totally positive  matrices are a class of matrices that is worth investigating not only because of its mathematical beauty, but also because of their myriad of applications. More specifically, they arise in many applications, to name a few,  statistics, approximation theory, operator theory, combinatorics, and planar resistor network~\cite{Fomin,GascaMicch,Karlin}. 

A matrix is said to be totally positive (totally nonnegative) if all its minors are positive (nonnegative). In this note, we focus our attention on totally positive matrices, but the results can be extended to totally nonnegative matrices. Factorizations of totally positive matrices has been extensively studied \cite{Canto, Cryer} to minimze the computation of minors while checking for total positivity. More importantly, in \cite{Cryer, Gant} it was proved that: 

\begin{Theorem}\label{uniqueLU}
If a square matrix $A$ has a nonzero principal minors, then $A$ has a unique $LU$ factorization such that $L$ has a unit diagonal. 
\end{Theorem}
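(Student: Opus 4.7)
The plan is to establish existence by induction on the size $n$ of $A$ and uniqueness by a short triangularity rigidity argument.

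For existence, the case $n=1$ is immediate since $(a_{11})=(1)(a_{11})$, and the $1\times 1$ leading principal minor is nonzero by hypothesis. For the inductive step, I would partition
$$A = \begin{pmatrix} a_{11} & v^{T} \\ w & A' \end{pmatrix},$$
with $a_{11}\neq 0$, and perform one step of block Gaussian elimination to obtain
$$A = \begin{pmatrix} 1 & 0 \\ w/a_{11} & I_{n-1} \end{pmatrix} \begin{pmatrix} a_{11} & v^{T} \\ 0 & S \end{pmatrix}, \qquad S := A' - \tfrac{1}{a_{11}}\, w v^{T}.$$
I would then verify the Schur-type identity $\det(A[k]) = a_{11}\,\det(S[k-1])$ for every leading principal submatrix $A[k]$ by restricting the block factorization to the top-left $k\times k$ corner and taking determinants. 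Since the leading principal minors of $A$ are nonzero, those of $S$ are nonzero, so the inductive hypothesis produces $S = L'U'$ with $L'$ unit lower triangular. Reassembling yields the desired $LU$ factorization of $A$ with unit lower triangular factor.

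For uniqueness, observe that the full determinant $\det(A)$ is itself a nonzero principal minor, so $A$ is invertible and any triangular factor in a factorization $A=LU$ is invertible. Suppose $A=L_{1}U_{1}=L_{2}U_{2}$ with each $L_{i}$ unit lower triangular and each $U_{i}$ upper triangular. Then
$$L_{2}^{-1}L_{1} \;=\; U_{2}U_{1}^{-1}.$$
The left-hand side is unit lower triangular (this class is closed under products and inverses), and the right-hand side is upper triangular. A matrix that is simultaneously lower and upper triangular is diagonal; combined with a unit diagonal it must equal the identity. Hence $L_{1}=L_{2}$ and $U_{1}=U_{2}$.

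The only step requiring care is the bookkeeping on the Schur complement, specifically the identity $\det(A[k])=a_{11}\det(S[k-1])$ that propagates the hypothesis through the induction. This is a routine block determinant computation rather than a genuine obstacle, and the remainder of the argument is standard linear algebra; indeed the result is classical and is attributed in the excerpt to \cite{Cryer, Gant}.
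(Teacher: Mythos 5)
Your proof is correct. Note that the paper itself gives no proof of this statement: Theorem~\ref{uniqueLU} is quoted from \cite{Cryer} and \cite{Gant}, so there is no in-text argument to compare against. Your two-part argument is the standard one and it is sound: the induction on the Schur complement $S=A'-\tfrac{1}{a_{11}}wv^{T}$ works because the first block factor is genuinely (not just block) unit lower triangular, so $(XY)[k]=X[k]\,Y[k]$ for leading principal submatrices, which gives $\det(A[k])=a_{11}\det(S[k-1])$ and propagates the nonvanishing of the leading principal minors; and the uniqueness step correctly notes that $U_{1}$ is invertible because $\det A\neq 0$, after which $L_{2}^{-1}L_{1}=U_{2}U_{1}^{-1}$ forces both sides to be the identity. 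The only caveat worth stating explicitly is that the hypothesis must be read as ``all \emph{leading} principal minors of $A$ are nonzero'' (the paper's phrasing is garbled); your argument uses exactly that, which is the correct classical hypothesis and is what holds for the totally positive matrices to which the paper applies the theorem.
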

This is in turn can be directly applied to totally positive matrices (and totally nonnegative matrices with non-zero principal minors).

In \cite{Fomin}, a combinatorial approach to study the parametrization of totally positive matrices has been adopted. More specifically, a bijection between totally positive matrices and essential planar networks is given in the following theorem:

\begin{Theorem}[Fomin and Zelvinsky]\label{121}
There exists a one-to-one correspondence between essential positive weightings of essential planar networks and totally positive matrices. 
\end{Theorem}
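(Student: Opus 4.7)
The plan is to establish the correspondence in both directions and verify that they are mutually inverse. For the forward direction, given an essential planar network with $n$ sources, $n$ sinks and positive edge weights, I would define the associated matrix $A=(a_{ij})$ where $a_{ij}$ is the sum, over all directed paths from source $i$ to sink $j$, of the product of edge weights along the path. The main tool is the Lindstr\"om--Gessel--Viennot lemma, which expresses each minor $\det A[I\mid J]$ as a signed sum over families of vertex-disjoint paths from the sources indexed by $I$ to the sinks indexed by $J$. Because the sources and sinks lie on the boundary of the planar region in matching order, the only permutation admitting vertex-disjoint families is the identity, so each minor reduces to a positive sum of products of positive weights; essentiality of the network ensures that at least one such family exists, making every minor strictly positive.

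For the reverse direction, I would invoke the Loewner--Whitney type factorization: every $n\times n$ totally positive matrix $A$ can be written uniquely as a product of $n^{2}$ elementary bidiagonal factors of the form $I+tE_{i,i+1}$ or $I+tE_{i+1,i}$ together with a positive diagonal factor, with each parameter $t>0$ and in a prescribed order. Each such elementary factor corresponds geometrically to a single weighted edge in a canonical wiring diagram, and concatenating these edges assembles an essential planar network whose path matrix, computed by the forward construction, is $A$. Theorem~\ref{uniqueLU} underpins the triangular uniqueness at this step, and each parameter $t$ can be extracted as an explicit ratio of minors of successive partial products, a formula that is well-defined precisely because the relevant minors are positive.

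The main obstacle is showing that the two constructions are genuinely inverse, i.e., that reconstructing a network from a totally positive matrix $A$ via the factorization and then re-applying the forward construction yields $A$ back, and conversely that starting from a weighted essential network, extracting the path matrix, and re-factoring recovers the same weights. One direction is handled by matching the LGV path enumeration against matrix multiplication one elementary factor at a time: it suffices to verify that a single weighted edge contributes exactly its associated elementary bidiagonal factor to the path matrix, and then iterate. The other direction requires that the bidiagonal parameters be canonically determined by $A$, which follows from the minor-ratio formulas combined with Theorem~\ref{uniqueLU}. Together these verifications close the bijection asserted in Theorem~\ref{121}.
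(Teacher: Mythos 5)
This theorem is not proved in the paper at all: it is quoted verbatim from Fomin and Zelvinsky \cite{Fomin} and used as a black box, so there is no internal proof to compare your attempt against. Your sketch is, in outline, a faithful reconstruction of how the result is actually established in the cited source: the forward direction via the Lindstr\"om--Gessel--Viennot lemma (planarity plus the boundary ordering of sources and sinks forces the identity permutation, so every minor is a sum of positive path-family weights), and the reverse direction via the Loewner--Whitney-type factorization of a totally positive matrix into elementary bidiagonal factors with positive parameters, each realized as a single weighted edge of the wiring diagram. The two places where your sketch leans on assertions that require genuine (if standard) verification are: first, that the \emph{specific} shape of the essential planar network admits, for \emph{every} pair of index sets $I,J$ of equal size, at least one vertex-disjoint family joining sources $I$ to sinks $J$ --- this is exactly what the word ``essential'' encodes and is where strict positivity (rather than mere nonnegativity) of all minors comes from; and second, the explicit ratio-of-minors formulas recovering each parameter $t$ from $A$, which are what make the correspondence injective and not merely surjective. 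Neither gap is a flaw in the approach; both are worked out in \cite{Fomin}. Note also one small mismatch with the paper's conventions: the networks here have $(n+1)$ sources and sinks for order $n$, so your count of $n^2$ elementary factors should be read against the appropriate matrix size.
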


In the second section of the paper, we define positively weighted edges of essential planar sub-network to which we associate lower triangular, diagonal, and upper triangular matrices. We then combine results from \cite{Cryer}, \cite{Gant}, and \cite{Fomin} to show that there is a bijection between the lower triangular (L), the diagonal (D), and the upper triangular (U) matrices obtained from the LDU factorization of a totally positive matrix and their corresponding essential planar sub-networks. In addition, the concatenation of these subnetworks yields the same network described in~\cite{Fomin}. 

In section \ref{formulas}, we obtain recursive formulae for computing the $(n+1) \times (n+1)$ lower, $L_n$,and upper triangular, $U$, matrices described in section \ref{LDU}. We then provide closed form formulae for the lower and upper triangular matrices in light of their corresponding planar sub-networks. 

In the last section, we provide combinatorial description for computing the inverses of L, D, and U; and hence the inverse of a totally positive matrix. In addition, we obtain closed-form formulae for computing $L^{-1}$, $D^{-1}$ and $U^{-1}$.

\section{LDU Decomposition of a Totally Positive Matrix}\label{LDU}
  In this section we use the fact that any totally positive matrix can be decomposed into the L, D, and U matrices, \cite{Cryer}. More specifically, it was proved that if $A$ is a totally positive matrix, then $A$ has a unique LU-factorization where $L$ is a unitlower triangular and $U$ an upper triangular matrix, \cite{Cryer,Gant}. This in turn gives a unique LDU decomposition of the matrix $A$, where $L$ and $U$ are unit lower and unit upper triangular matrices, respectively, and $D$ a diagonal matrix. In addition, we use Theorem~\ref{121} to show that there is a bijection between the LDU factorization of $A$ and subgraphs of the essential planar network defined.


We define a planar network of order $n$ to be an acyclic, planar directed graph. We will assume that each network has $(n+1)-$sources and $(n+1)-$sinks and all edges are directed from left to right, where each edge $\pi$ is assigned a scalar weight $\omega(\pi)$. In addition, the weight of a path $p$ from source $i$ to sink $j$ is defined as the product of the weights of its edges, namely $\displaystyle \omega(p)=\prod_{\pi \in p}\omega(\pi).$

In our setting, we will consider three planar sub-networks, which we call the $L-$type, the $D-$type and the $U-$type networks. Throughout this note, $\mathbb{N}$ denotes the set of natural numbers, and $\mathbb{N}_0$ the set of nonnegative integers. 

The $L-$type subgraph of order $n$ is a planar network  consisting of fall steps and horizontal steps. More specifically, we give the following definition. 
 
\begin{Definition}\label{L_Path}
For every $n,m \in \mathbb{N}$ define an $\mathcal{L}^{m}_{n}$-path to be any sequence of $n+1$ points, 
$(x_{i},y_{i})\in\mathbb{N}_0\times\mathbb{N}_0$ for $i=0, 1, 2, \ldots, n$ with the following conditions:
\begin{enumerate}
\item
$x_{i}\,=\,m+i$ for all $i$.
\item
$y_{i} \in \{0, 1, 2, \ldots, n\}$ for all $i$.
\item
$y_{i+1} \in \{y_i, y_{i}-1\}$  for all $i < n$.
\item
If $y_{i+1}=y_{i}-1$, then $y_{i}\geq n-i$.
\end{enumerate}
Also, define $\mathcal{P}^{L_n}_{i,j}$ to be the set of all $\mathcal{L}^{0}_{n}$-paths with $y_{0}=i$ and $y_{n}=j$.
\end{Definition}

In addition, 

 \begin{Definition}\label{Ln}
For every $n\in \mathbb{N}$ define $L_n$ to be the $(n+1)\times(n+1)$ matrix such that \[\displaystyle L_n[i,j] =\sum_{p\in \mathcal{P}^{L_n}_{i,j}}\omega(p)\] for all $0\leq i, j\leq n$, where the weight of the edges are defined as follows,
\begin{itemize}
\item For $j\,\neq\,0$, $\omega([(m+i,j),(m+i+1,j-1)])=t_{j,i+j-n}$, 
\item
$\omega([(m+i,j),(m+i+1,j)]) =1$,
\end{itemize}
for all $i, j = 0, 1, \ldots, n.$
\end{Definition}
The following lemma is a consequence of definition~\ref{L_Path}.
\begin{Lemma} \label{y_dcrsng}
If $[(x_{i},y_{i})]_{i =0, \ldots, n}$ is an $\mathcal{L}^{m}_{n}$-Path, then $y_{i} \leq y_{j}$ whenever $i>j$.
\end{Lemma}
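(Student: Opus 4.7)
The statement is that the $y$-coordinates along an $\mathcal{L}^m_n$-path form a weakly decreasing sequence as the index grows. My plan is a direct induction on the gap $k = i - j \geq 0$, using only condition (iii) of Definition~\ref{L_Path}, which restricts consecutive $y$-values to $y_{i+1} \in \{y_i, y_i - 1\}$.

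First I would dispose of the trivial case $k = 0$, where $y_i = y_j$ so the inequality holds with equality. For the base case $k = 1$, i.e.\ $i = j+1$, condition (iii) gives $y_{j+1} \in \{y_j, y_j - 1\}$, hence $y_{j+1} \leq y_j$. For the inductive step, assuming the claim holds for all pairs whose index difference is less than $k$, I would write $y_i \leq y_{i-1}$ by the base case applied to the consecutive pair $(i-1, i)$, and $y_{i-1} \leq y_j$ by the inductive hypothesis applied to the pair $(j, i-1)$, since $(i-1) - j = k - 1 < k$. Chaining these two inequalities yields $y_i \leq y_j$, completing the induction.

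There is no substantial obstacle here: the lemma is an immediate unpacking of the local constraint in condition (iii), iterated along the path. Conditions (i), (ii), and (iv) of Definition~\ref{L_Path} are not needed for this statement; condition (iv) will matter later for bounding how many fall steps a path may take, but monotonicity alone uses only (iii). The only care needed is to state the induction carefully, since the lemma is phrased in terms of $i > j$ rather than $i \geq j$, so the base case is the single-step comparison.
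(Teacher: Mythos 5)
Your proof is correct and is essentially the paper's argument: the paper simply notes that $y_{i+1}\leq y_i$ for all $i<n$ (condition (iii)) and that the claim "follows directly," which is exactly the chaining of consecutive inequalities you formalize by induction on $i-j$. No difference in substance, only in the level of detail.
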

\begin{proof}
This follows directly from the fact that $y_{i+1}\,\leq\,y_{i}$ for all $i\,<\,n$.
\end{proof}

The next corollary shows that the matrix whose entries are defined in definition ~\ref{Ln} is unit lower triangular. 

\begin{Corollary}\label{Ln_unitlower}
For all $n \in\mathbb{N}$, the weight matrix of any $L-$type network of order $n$ is unit lower triangular. 
\end{Corollary}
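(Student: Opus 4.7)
The plan is to unpack the definitions and invoke Lemma~\ref{y_dcrsng} directly. The two things to establish are that $L_n[i,j]=0$ for $i<j$ and that $L_n[i,i]=1$; both will fall out of the fact that the $y$-coordinates along any $\mathcal{L}^{0}_{n}$-path are non-increasing.

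First I would handle the strictly-upper entries. Fix $i<j$ and take any hypothetical path $p=[(x_k,y_k)]_{k=0,\ldots,n}\in\mathcal{P}^{L_n}_{i,j}$. By Definition~\ref{L_Path} the endpoints satisfy $y_0=i$ and $y_n=j$, so in particular $y_n>y_0$. But Lemma~\ref{y_dcrsng} applied with indices $n>0$ forces $y_n\le y_0$, a contradiction. Hence $\mathcal{P}^{L_n}_{i,j}=\emptyset$ and the defining sum gives $L_n[i,j]=0$.

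Next I would pin down the diagonal. Fix $i=j$ and let $p\in\mathcal{P}^{L_n}_{i,i}$. Again by Lemma~\ref{y_dcrsng} the sequence $y_0,y_1,\ldots,y_n$ is weakly decreasing, and since $y_0=y_n=i$ this sequence must be constant, $y_k=i$ for every $k$. Then condition~(iii) of Definition~\ref{L_Path} forces every step to be horizontal, so by the second bullet of Definition~\ref{Ln} each edge carries weight $1$ and $\omega(p)=1$. Moreover, the constancy of $(y_k)$ together with $x_k=k$ shows that such a $p$ is uniquely determined, so $\mathcal{P}^{L_n}_{i,i}$ has exactly one element and $L_n[i,i]=1$.

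Combining the two cases gives that $L_n$ is unit lower triangular, as claimed. There is no real obstacle here: the work has already been done in Lemma~\ref{y_dcrsng}, and the corollary is essentially a clean reading of Definitions~\ref{L_Path} and~\ref{Ln} at the boundary cases. The only small care needed is to observe that, in the diagonal case, non-increase together with equal endpoints forces constancy, which then triggers the horizontal-edge clause and pins both the weight and the identity of the path simultaneously.
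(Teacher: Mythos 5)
Your proof is correct and follows essentially the same route as the paper's: Lemma~\ref{y_dcrsng} kills the above-diagonal entries, and the non-increasing-with-equal-endpoints argument forces the unique constant path of weight $1$ on the diagonal. (One tiny nitpick: once $y_k=i$ for all $k$, each step is horizontal simply because $y_{k+1}=y_k$; there is no need to invoke condition~(iii) of Definition~\ref{L_Path}.)
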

\begin{proof}
If $i<j$, then it follows directly from Lemma~\ref{y_dcrsng} that $\mathcal{P}_{i,j}^{L_n}$ is empty, making the elements above the diagonal all $0$. To show that it has $1$'s on the diagonal, consider the case where $i = j$. Since we consider a decreasing sequence $y_0,\ldots,y_n$ with $y_0\,=\,y_n\,=i$ , and from Lemma~\ref{y_dcrsng}, $y_n\leq y_{_k}\leq y_0$ for all $k\in\{0.\ldots,n\}$. It follows that $y_{_k}=i$ and the set $\mathcal{P}^{L_n}_{i,i}$ is $\{[(k,i)]_{k=0, \ldots, n}\}$. 
It follows that the diagonal entry must be $1$ since the weight of that path is $1$. Consequently, the elements of the diagonal of the weight matrix are all $1$'s.
\end{proof}
 Figure~\ref{fig:Lexample} illustrates an $L-$type network of order $2$ with weights defined as in Definition~\ref{Ln} and Figure~\ref{fig:L2} is the weight matrix associated to it.
\begin{figure}[ht]
\begin{subfigure}{0.35\textwidth}
\centering
\begin{tikzpicture}
\draw[black, thick]   (0,0)node[anchor=east]{$0$}--(4,0)node[anchor=west]{$0$}--(3.5,0.25)node[anchor=south]{$t_{1,0}$}--(1.5,1.25)node[anchor=south]{$t_{2,0}$}--(0,2)node[anchor=east]{$2$}--(4,2)node[anchor=west]{$2$};
\draw[black, thick]   (0,1)node[anchor=east]{$1$}--(4,1)node[anchor=west]{$1$}--(3.5,1.25)node[anchor=south]{$t_{2,1}$}--(2,2);
\end{tikzpicture}
\caption{$L-$type network of order $2$}\label{fig:Lnet2}
\end{subfigure}
\begin{subfigure}{0.35\textwidth}
\centering
$\left(\begin{array}{ccc}
1               &   0               &   0\\
t_{1,0}         &   1               &   0\\
t_{2,0}t_{1,0}  &   t_{2,0}+t_{2,1} &   1
\end{array}\right)$
\caption{The weight matrix $L_2$}\label{fig:L2}
\end{subfigure}
\caption{Description of $L_2$}\label{fig:Lexample}
\end{figure}
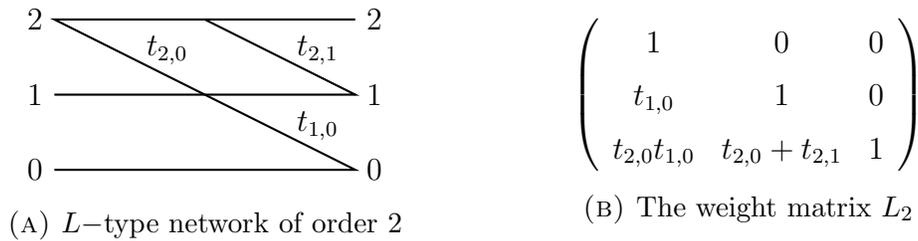

Similarly, we define another subgraph of order $n$, namely the $D-$type path. This path consists of horizontal steps only. 

\begin{Definition}\label{D_Path}
For every $n,m\in \mathbb{N},$ define a $\mathcal{D}^{m}_{n}$-Path to be any sequence of the form 
$[(m,j),(m+1,j)]$ for some $j=0, 1, \dots, n$.\\
Also, define $\mathcal{P}^{D_n}_{i,j}$ to be the set of all $\mathcal{D}^{0}_{n}$-paths with $y_{0}=i$ and $y_{1}\,=j$.
\end{Definition}

In analogy to the $L$-type path, we can associate a matrix $D$ to a $D-$path. 

\begin{Definition}\label{Dn}
For every $n\in\mathbb{N},$ define $D_n$ to be the $(n+1)\times(n+1)$ matrix such that \[D_n[i,j]=\sum_{p\in\mathcal{P}^{D_n}_{i,j}}\omega(p)\] for all $0\,\leq\,i,j\leq n$; where $\omega[(m, i), (m+1,i)]=t_{i,i}$.
\end{Definition}

\begin{Lemma}\label{Dn_diagonal}
For all $n \in \mathbb{N}$, $D_n$ is a diagonal matrix with diagonal elements $D_n[i,i] = t_{i,i}$. More specifically, 
\[D_n = \left(\begin{array}{cccc}
t_{0,0} & 0 & \cdots & 0\\
0 & t_{1,1} & \cdots & 0\\
\vdots & & \ddots & \vdots\\
0 & 0 & \cdots & t_{n,n}
\end{array}\right).\]
\end{Lemma}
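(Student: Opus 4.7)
The plan is to read the definition of a $\mathcal{D}^{0}_{n}$-path literally and note that it forces $y_{0}=y_{1}$. Since each path in $\mathcal{P}^{D_n}_{i,j}$ is required to have the form $[(0,j'),(1,j')]$ for some $j'\in\{0,1,\dots,n\}$ while simultaneously satisfying $y_{0}=i$ and $y_{1}=j$, the set $\mathcal{P}^{D_n}_{i,j}$ is non-empty if and only if $i=j$.

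From here the argument splits into two cases. For the off-diagonal case $i\neq j$, emptiness of $\mathcal{P}^{D_n}_{i,j}$ makes the defining sum in Definition~\ref{Dn} an empty sum, so $D_n[i,j]=0$. For the diagonal case $i=j$, the set $\mathcal{P}^{D_n}_{i,i}$ consists of the unique path $[(0,i),(1,i)]$, whose weight is by definition $\omega([(0,i),(1,i)])=t_{i,i}$, and hence $D_n[i,i]=t_{i,i}$.

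Combining these two observations immediately yields the claimed matrix form
\[D_n \;=\; \left(\begin{array}{cccc}
t_{0,0} & 0 & \cdots & 0\\
0 & t_{1,1} & \cdots & 0\\
\vdots & & \ddots & \vdots\\
0 & 0 & \cdots & t_{n,n}
\end{array}\right).\]
There is no real obstacle here; the lemma is a direct unpacking of Definitions~\ref{D_Path} and~\ref{Dn}, and unlike the corresponding lemma for $L_n$ (which required Lemma~\ref{y_dcrsng} to control the monotonicity of $y_i$), the $D$-type paths consist of a single horizontal step so no auxiliary monotonicity argument is needed.
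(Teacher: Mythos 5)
Your proof is correct and follows essentially the same route as the paper's: the set $\mathcal{P}^{D_n}_{i,j}$ is empty for $i\neq j$ (so the off-diagonal entries vanish as empty sums) and is the singleton $\{[(0,i),(1,i)]\}$ for $i=j$, giving $D_n[i,i]=t_{i,i}$. Your version is slightly more explicit about why the path definition forces $y_0=y_1$, but the argument is the same.
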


\begin{proof}
If $i \neq j$, then $D_n[i,j]=0$ from definition~\ref{Dn}; thus $D_n$ is a diagonal matrix. Now, for $i=j$, the set $\mathcal{P}^{D_n}_{i,i}=\{[(0, i), (1, i)]\}$. It follows that $D_n[i,i]\,=\,t_{i,i}$.
\end{proof}

Finally, we can define the $U-$type network in which only rise and horizontal steps are allowed.
 
\begin{Definition}\label{U_Path}
For every $n,m \in \mathbb{N},$ define a $\mathcal{U}^{m}_{n}$-path to be any sequence of $n+1$ points 
$(x_{i},y_{i})\in\mathbb{N}_0\times\mathbb{N}_0$ for $i=0, 1, 2, \ldots, n$ with the following conditions:
\begin{enumerate}
\item
$x_{i}\,=\,m+i$ for all $i$.
\item
$y_{i} \in \{0, 1, 2, \ldots, n\}$ for all $i$.
\item
$y_{i+1} \in \{y_i, y_{i}+1\}$  for all $i < n$.
\item
If $y_{i+1}=y_{i}+1$, then $y_{i}\geq i$.
\end{enumerate}
Also, define $\mathcal{P}^{U_n}_{i,j}$ to be the set of all $\mathcal{U}^{0}_{n}$-paths with $y_{0}=i$ and $y_{n}=j$.
\end{Definition}

Then, we assign a matrix $U_n$ to the $U-$type network of order $n$.

 \begin{Definition}\label{Un}
For every $n\in \mathbb{N},$ define $U_n$ to be the $(n+1)\times(n+1)$ matrix such that \[\displaystyle U_n[i,j] =\sum_{p\in \mathcal{P}^{U_n}_{i,j}}\omega(p)\] for all $0\leq i, j\leq n$, where the weight of the edges are defined as follows:
\begin{itemize}
\item For $j\,\neq\,0$, $\omega([(m+i,j),(m+i+1,j+1)])=t_{j-i,j+1}$, 
\item 
$\omega([(m+i,j),(m+i+1,j)]) =1$,
\end{itemize}
for all $i, j = 0, 1, \ldots, n.$
\end{Definition}
The following lemma and corollary are similar to Lemma~\ref{y_dcrsng} and Corollary~\ref{Ln_unitlower}. The proofs are also similar. They can be replicated by reversing the inequalities.
\begin{Lemma} \label{y_incrsng}
If $[(x_{i},y_{i})]_{i =0, \ldots, n}$ is a $\mathcal{U}^{m}_{n}$-Path, then $y_{i} \geq y_{j}$ whenever $i>j$.
\end{Lemma}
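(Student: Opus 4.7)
The plan is to mirror exactly the argument used for Lemma~\ref{y_dcrsng}, since the defining conditions for a $\mathcal{U}^{m}_{n}$-path differ from those of an $\mathcal{L}^{m}_{n}$-path only in the direction of the allowed vertical step. By condition (iii) of Definition~\ref{U_Path}, we have $y_{i+1} \in \{y_i, y_i+1\}$ for every $i < n$, so in particular $y_{i+1} \geq y_i$ for all such $i$.

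From this, the conclusion $y_i \geq y_j$ whenever $i > j$ follows by a trivial induction on the difference $i-j$: the base case $i=j+1$ is the displayed inequality above, and the inductive step is obtained by chaining $y_i \geq y_{i-1} \geq \cdots \geq y_{j+1} \geq y_j$. No computation is needed and there is no genuine obstacle; this is a purely structural statement about monotonic sequences. I would phrase the proof in one short sentence, in parallel with the proof of Lemma~\ref{y_dcrsng}, namely: \emph{This follows directly from the fact that $y_{i+1} \geq y_i$ for all $i < n$.} This keeps the presentation uniform with the $L$-type case and sets up the analogous corollary (that the weight matrix of a $U$-type network is unit upper triangular), whose proof will again be obtained by reversing the inequalities in the argument of Corollary~\ref{Ln_unitlower}.
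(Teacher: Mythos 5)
Your proof is correct and is exactly the argument the paper intends: the paper omits the proof, stating only that it can be replicated from Lemma~\ref{y_dcrsng} by reversing the inequalities, which is precisely what you do. Nothing further is needed.
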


\begin{Corollary}\label{Un_unitupper}
For all $n \in\mathbb{N}$, the weight matrix of a $U-$type network of order $n$ is unit upper triangular. 
\end{Corollary}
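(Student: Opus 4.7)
The plan is to mirror the argument used for Corollary~\ref{Ln_unitlower}, simply exchanging the direction of monotonicity. First I would invoke Lemma~\ref{y_incrsng}, which tells me that the $y$-coordinates along a $\mathcal{U}^{m}_{n}$-path form a non-decreasing sequence as the index increases. Applied to any path in $\mathcal{P}^{U_n}_{i,j}$, this forces $y_{n} \geq y_{0}$, i.e.\ $j \geq i$.

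For entries strictly below the diagonal ($i > j$), this inequality cannot be satisfied, so $\mathcal{P}^{U_n}_{i,j} = \varnothing$ and the corresponding entry $U_n[i,j]$ is an empty sum, hence $0$. This handles the off-diagonal half of the claim.

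For the diagonal entries ($i = j$), I would argue that any path in $\mathcal{P}^{U_n}_{i,i}$ has $y_{0} = y_{n} = i$, and by Lemma~\ref{y_incrsng} the whole sequence $y_{0} \leq y_{1} \leq \cdots \leq y_{n}$ is sandwiched between these equal values, forcing $y_{k} = i$ for every $k$. Thus $\mathcal{P}^{U_n}_{i,i}$ consists of the single path $[(k,i)]_{k=0,\ldots,n}$ whose edges are all horizontal and hence each weighted $1$ by Definition~\ref{Un}. Consequently $U_n[i,i] = 1$, and combining both parts, $U_n$ is unit upper triangular.

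I do not anticipate a genuine obstacle here: the argument is purely a sign-flip of the proof already given for the $L$-type case, and every ingredient needed (the monotonicity lemma and the weight convention assigning $1$ to horizontal edges) is already in place. The only small care required is to cite Lemma~\ref{y_incrsng} in the correct direction and to note that the horizontal-edge weight convention in Definition~\ref{Un} is identical to that of Definition~\ref{Ln}, so the diagonal path has weight exactly $1$.
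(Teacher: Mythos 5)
Your proof is correct and is precisely the argument the paper intends: it explicitly states that the proof of Corollary~\ref{Un_unitupper} is obtained from that of Corollary~\ref{Ln_unitlower} by reversing the inequalities, which is exactly what you have carried out using Lemma~\ref{y_incrsng}. No gaps.
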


Figure~\ref{fig:subnetworks} illustrates an example of the essential planar subnetworks of order $2$ and their concatenation which yields its essential planar network of order $2$.

\begin{figure}[ht]
\begin{subfigure}[b]{0.3\textwidth}
\centering
\begin{tikzpicture}
\draw[black, thick]   (0,0)node[anchor=east]{$0$}--(4,0)node[anchor=west]{$0$}--(3.5,0.25)node[anchor=south]{$t_{1,0}$}--(1.5,1.25)node[anchor=south]{$t_{2,0}$}--(0,2)node[anchor=east]{$2$}--(4,2)node[anchor=west]{$2$};
\draw[black, thick]   (0,1)node[anchor=east]{$1$}--(4,1)node[anchor=west]{$1$}--(3.5,1.25)node[anchor=south]{$t_{2,1}$}--(2,2);
\end{tikzpicture}
\caption{$L-$type network of order $2$}
\end{subfigure}
\begin{subfigure}[b]{0.3\textwidth}
\centering
\newcommand{\dr}[1]{\draw[black, thick] (0,#1)node[anchor=east]{#1}--(1,#1)node[anchor=south]{$t_{#1,#1}$}--(2,#1)node[anchor=west]{#1};}
\begin{tikzpicture}
\dr{0}\dr{1}\dr{2}
\end{tikzpicture}
\caption{$D-$type network of order $2$}
\end{subfigure}
\begin{subfigure}[b]{0.3\textwidth}
\centering
\begin{tikzpicture}
\draw[black, thick]   (4,0)node[anchor=west]{$0$}--(0,0)node[anchor=east]{$0$}--(0.5,0.25)node[anchor=south]{$t_{0,1}$}--(2.5,1.25)node[anchor=south]{$t_{0,2}$}--(4,2)node[anchor=west]{$2$}--(0,2)node[anchor=east]{$2$};
\draw[black, thick]   (4,1)node[anchor=west]{$1$}--(0,1)node[anchor=east]{$1$}--(0.5,1.25)node[anchor=south]{$t_{1,2}$}--(2,2);
\end{tikzpicture}
\caption{$U-$type network of order $2$}
\end{subfigure}

\begin{subfigure}[b]{\textwidth}
\centering
\begin{tikzpicture}
\draw[black, thick]   (0,0)node[anchor=east]{$0$}--(5,0)node[anchor=south]{$t_{0,0}$}--(10,0)node[anchor=west]{$0$};
\draw[black, thick]   (0,1)node[anchor=east]{$1$}--(5,1)node[anchor=south]{$t_{1,1}$}--(10,1)node[anchor=west]{$1$};
\draw[black, thick]     (0,2)node[anchor=east]{$2$}--(5,2)node[anchor=south]{$t_{2,2}$}--(10,2)node[anchor=west]{$2$};
\draw[black, thick]   (0,2)--(1.5,1.25)node[anchor=south]{$t_{2,0}$}--(3.5,0.25)node[anchor=south]{$t_{1,0}$}--(4,0)--(6,0)--(6.5,0.25)node[anchor=south]{$t_{0,1}$}--(8.5,1.25)node[anchor=south]{$t_{0,2}$}--(10,2);
\draw[black, thick]   (2,2)--(3.5,1.25)node[anchor=south]{$t_{2,1}$}--(4,1)--(6,1)--(6.5,1.25)node[anchor=south]{$t_{1,2}$}--(8,2);
\end{tikzpicture}
\caption{Essential planar network of order $2$}
\label{fig:essplanar}
\end{subfigure}
\caption{Concatenation of essential planar subnetworks of order $2$}\label{fig:subnetworks}
\end{figure}
In this section, we constructed an $L_n$ lower triangular matrix obtained from an $L-$type network of order $n$, and similarly, the $D-$type and $U-$type networks were used to recover the entries of a diagonal matrix $D_n$ and an upper triangular matrix $U_n$, respectively. In addition, we use the fact that the concatenation of these networks is equivalent to computing the product of their corresponding weight matrices~\cite{Fallat, Fomin} to abtain the main theorem of this section. 
\begin{Theorem}\label{A=LDU}
The $LDU$ decomposition of a totally positive matrix $A$ can be recovered by decomposing the essential planar network associated with $A$ into an $L-$type, a $D-$type and a $U-$type networks, respectively.
\end{Theorem}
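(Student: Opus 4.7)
The plan is to combine three ingredients already established or cited in the paper: (1) the Fomin--Zelevinsky bijection (Theorem \ref{121}) which attaches to a totally positive matrix $A$ an essential planar network whose weight matrix equals $A$; (2) the Lindstr\"om--Gessel--Viennot principle (cited to Fallat and Fomin) which says that concatenating two planar networks corresponds to multiplying their weight matrices; (3) the uniqueness of the LDU decomposition for matrices with nonzero principal minors (Theorem \ref{uniqueLU}), which applies to totally positive matrices since all their minors, including principal ones, are positive.

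The first step is to inspect the essential planar network depicted in Figure \ref{fig:essplanar} and verify directly that it coincides, both as a directed graph and edge-by-edge as a weighted graph, with the left-to-right concatenation of the $L$-type, $D$-type, and $U$-type subnetworks of order $n$ defined in Definitions \ref{L_Path}, \ref{D_Path}, and \ref{U_Path}. Concretely, I would check that the fall edges of the left half carry weights $t_{j,i+j-n}$ matching Definition \ref{Ln}, the central horizontal edges carry $t_{i,i}$ matching Definition \ref{Dn}, and the rise edges of the right half carry $t_{j-i,j+1}$ matching Definition \ref{Un}, so that the edge labels of the concatenated network are exactly those of the essential planar network associated to $A$ in the sense of Fomin--Zelevinsky. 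This is a labeling/indexing check rather than a deep argument, but it is where the bookkeeping has to be done carefully.

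Once this identification is made, the Lindstr\"om--Gessel--Viennot composition rule immediately yields
\[
A \;=\; L_n \, D_n \, U_n,
\]
where $L_n$, $D_n$, $U_n$ are the weight matrices of the three subnetworks. By Corollaries \ref{Ln_unitlower}, \ref{Un_unitupper} and Lemma \ref{Dn_diagonal}, $L_n$ is unit lower triangular, $U_n$ is unit upper triangular, and $D_n$ is diagonal. Since $A$ is totally positive, every principal minor of $A$ is nonzero, so Theorem \ref{uniqueLU} applies and guarantees that such a factorization is unique; hence $L_n D_n U_n$ is \emph{the} LDU decomposition of $A$.

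The main obstacle I anticipate is not conceptual but notational: making sure that the indexing convention $t_{j,i+j-n}$ on a fall edge of the $L$-type network, $t_{i,i}$ on a horizontal edge of the $D$-type network, and $t_{j-i,j+1}$ on a rise edge of the $U$-type network combine to reproduce exactly the labels appearing on the single essential planar network in Figure \ref{fig:essplanar}, with no shift or relabeling needed after concatenation. Once this edge-label compatibility is verified for general $n$ (by induction on $n$ or by a direct coordinate computation), the LGV step and the uniqueness step are immediate, and the theorem follows.
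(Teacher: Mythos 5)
Your proposal is correct and follows essentially the same route as the paper, which also deduces the theorem from Corollaries~\ref{Ln_unitlower} and~\ref{Un_unitupper}, Lemma~\ref{Dn_diagonal}, and the concatenation-equals-matrix-product principle from \cite{Fallat, Fomin}. Your explicit appeal to Theorem~\ref{uniqueLU} to conclude that the resulting factorization is \emph{the} LDU decomposition is a detail the paper leaves implicit, but it is not a different argument.
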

The proof of this theorem follows directly from Corollaries~\ref{Ln_unitlower},~\ref{Un_unitupper}, Lemma~\ref{Dn_diagonal} and the result on cancatenation of planar networks~\cite{Fallat, Fomin}.

\section{Formulae for $L_n$ and $U_n$}\label{formulas}
In this section we describe how we can compute the entries of the lower triangular matrix $L_n$ and the upper triangular matrix $U_n$ presented in the previous section. We begin with a recursive formula for $L_n$. To clarify the notation, $I_n$ is the $n\times n$ identity matrix, $\textbf{0}_n$ is the column vector of $n$ zeros, and $L_n^{-1}$ is the inverse of $L_n$, whose existence and complete description is presented in the following section.
\begin{Proposition}\label{recLn}
If $L_n$ is defined as in Definition~\ref{Ln}, then $L_{n+1}$ can be computed recursively in the following way
\begin{itemize}
\item $L_0=(1)$
\item $L_{n+1}=F_{L_n}(L_n\oplus1)$, where $F_{L_n}=\left(\begin{array}{cc}
I_{n+1}                 &   \textbf{0}_{n+1}\\
-L_{n+1}^{-1}[n+1,j]_{0\leq j\leq n}  &   1
\end{array}\right)$
\end{itemize}
\end{Proposition}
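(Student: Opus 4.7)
My plan is to first establish the block structure
$$L_{n+1} = \begin{pmatrix} L_n & \mathbf{0}_{n+1} \\ r^T & 1 \end{pmatrix}$$
for a suitable row vector $r$, and then identify $r^T$ with the row prescribed by the formula $-L_{n+1}^{-1}[n+1,j]_{0\le j\le n}\cdot L_n$. The base case $L_0=(1)$ is immediate from Definition~\ref{Ln}.

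For the block structure, the final column of zeros and the bottom-right entry $1$ follow from Corollary~\ref{Ln_unitlower}. The substantive content is that the top-left $(n+1)\times(n+1)$ block of $L_{n+1}$ equals $L_n$. I would prove this by exhibiting a weight-preserving bijection between $\mathcal{P}^{L_{n+1}}_{i,j}$ and $\mathcal{P}^{L_n}_{i,j}$ for $0\le i,j\le n$. Given an $\mathcal{L}^0_{n+1}$-path from $i$ to $j$, condition (iv) of Definition~\ref{L_Path} applied at the initial step requires $y_0\ge n+1$ for a fall; since $y_0=i\le n$, the first step is forced to be horizontal. Deleting the vertex $(0,i)$ and translating the remaining $n+1$ vertices by $-1$ in the $x$-coordinate yields an $\mathcal{L}^0_n$-path from $i$ to $j$; the inverse prepends a horizontal step. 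The weight check reduces to the identity $t_{y,(k-1)+y-n}=t_{y,k+y-(n+1)}$, showing that a fall edge at step $k$ of the longer path carries the same weight as the corresponding edge at step $k-1$ in the shorter one.

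Once the block form is in place, the remainder is formal. Block multiplication gives
$$F_{L_n}(L_n\oplus 1) = \begin{pmatrix} L_n & \mathbf{0}_{n+1} \\ -u\, L_n & 1\end{pmatrix}, \qquad u := L_{n+1}^{-1}[n+1,j]_{0\le j\le n},$$
so it suffices to show $r^T=-u\, L_n$. Since $L_{n+1}$ is unit lower triangular, so is $L_{n+1}^{-1}$, and its full last row is $(u,1)$. Reading off the last row of the identity $L_{n+1}^{-1}L_{n+1}=I_{n+2}$ and substituting $L_{n+1}[k,j]=L_n[k,j]$ for $k\le n$ yields $L_{n+1}[n+1,j]=-(uL_n)_j$ for $j\le n$, which is the desired equality. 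The main obstacle is the weight-preserving bijection in the middle paragraph: the subscript of $t$ in Definition~\ref{Ln} intertwines the step index with the order $n$, so one must verify carefully that incrementing $n$ by $1$ and decrementing the step index by $1$ exactly compensate. The rest of the proof is routine block-matrix algebra together with the standard properties of unit lower triangular matrices.
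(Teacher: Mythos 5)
Your proposal is correct and follows essentially the same route as the paper: block-multiply $F_{L_n}(L_n\oplus 1)$, identify the top-left block of $L_{n+1}$ with $L_n$ via the forced horizontal first step (condition (iv) of Definition~\ref{L_Path}) and a weight-preserving shift, and recover the bottom row from the last row of $L_{n+1}^{-1}L_{n+1}=I$. Your explicit check that the weight subscripts $t_{y,(k-1)+y-n}=t_{y,k+y-(n+1)}$ match under the shift is a slightly more careful rendering of the paper's assertion that $\omega(p)=\omega(p')$, but the argument is the same.
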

\begin{proof}
We use induction to prove the proposition. The base case is trivial. We consider the inductive step, $L_{n+1}=F_{L_n}(L_n\oplus1)$. We begin by expanding the product $F_{L_n}(L_n\oplus1)$ as follows:
\begin{align*}
F_{L_n}(L_n\oplus1)&=\left(\begin{array}{cc}
I_{n+1}                 &   \textbf{0}_{n+1}\\
-L_{n+1}^{-1}[n+1,j]_{0\leq j\leq n}  &   1
\end{array}\right)\left(\begin{array}{cc}
L_n                         &   \textbf{0}_{n+1}\\
\textbf{0}_{n+1}^\intercal  &   1
\end{array}\right)\\
&=\left(\begin{array}{cc}
L_n                         &   \textbf{0}_{n+1}\\
-L_{n+1}^{-1}[n+1,j]_{0\leq j\leq n}L_n   &   1
\end{array}\right)
\end{align*}
First, we show that $L_{n+1}[i,j]_{0\leq i,j\leq n}=L_n$. This can be best described combinatorially, where a path $p\in\mathcal{P}_{i,j}^{L_{n+1}}$ ($i\leq n$) is equivalent to a horizontal step $[(0,i),(1,i)]$ followed by an $\mathcal{L}_n^1-$path $p'$. Notice that a fall step $[(0,i),(1,i-1)]$ is not allowed since $i<n+1$ (see Definition~\ref{L_Path}). It follows that $\omega(p)=\omega(p')$, and our description clearly shows a bijection. So, indeed $L_{n+1}[i,j]=L_n[i,j]$ for $i,j\in\{0,\ldots,n\}$.\\
The rightmost column is trivial since $L_{n+1}$ is unit lower triangular by Corollary~\ref{Ln_unitlower}.
Finally, we need to show that $L_{n+1}[n+1,j]_{0\leq j\leq n}=-L_{n+1}^{-1}[n+1,j]_{0\leq j\leq n}L_n$. We begin by the simple fact that
\[L_{n+1}^{-1}L_{n+1}=I_{n+1}\]
Now, we rewrite it in block matrix notation, where $L_{n+1}^{-1}[i,j]_{0\leq i,j\leq n}=L_n^{-1}$ to guarantee the product is the identity matrix.
\[\left(\begin{array}{cc}
L_n^{-1}                &   \textbf{0}_{n+1}\\
L_{n+1}^{-1}[n+1,j]_{0\leq j\leq n}   &   1
\end{array}\right)\left(\begin{array}{cc}
L_n                 &   \textbf{0}_{n+1}\\
L_{n+1}[n+1,j]_{0\leq j\leq n}    &   1
\end{array}\right)=\left(\begin{array}{cc}
I_{n+1}                     &   \textbf{0}_{n+1}\\
\textbf{0}_{n+1}^\intercal  &   1
\end{array}\right)\]
Then, consider the product of the second row with the first column which produce the following equation
\[L_{n+1}^{-1}[n+1,j]_{0\leq j\leq n}L_n+L_{n+1}[n+1,j]_{0\leq j\leq n}=\textbf{0}_{n+1}^\intercal\] which, if rearranged, will show that
\[L_{n+1}[n+1,j]_{0\leq j\leq n}=-L_{n+1}^{-1}[n+1,j]_{0\leq j\leq n}L_n\]
\end{proof}
In a similar fashion, we define a recursive formula for $U_n$ as follows:
\begin{Proposition}\label{recUn}
If $U_n$ is defined as in Definition~\ref{Un}, then it follows the following recursion
\begin{itemize}
\item $U_0=(1)$
\item $U_{n+1}=(U_n\oplus1)F_{U_n}$, where $F_{U_n}=\left(\begin{array}{cc}
I_{n+1}                     &   -U_{n+1}^{-1}[i,n+1]_{0\leq i\leq n}\\
\textbf{0}_{n+1}^\intercal  &   1
\end{array}\right)$
\end{itemize}
\end{Proposition}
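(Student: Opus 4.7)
The proof proposal mirrors that of Proposition~\ref{recLn}, with the roles of rows/columns (and of the ``new'' index) suitably interchanged. I would proceed by induction on $n$: the base case $U_0=(1)$ is immediate, and for the inductive step I would first expand the claimed product in block form,
\[
(U_n\oplus 1)F_{U_n}
=\begin{pmatrix} U_n & \mathbf{0}_{n+1}\\ \mathbf{0}_{n+1}^\intercal & 1\end{pmatrix}
\begin{pmatrix} I_{n+1} & -U_{n+1}^{-1}[i,n+1]_{0\le i\le n}\\ \mathbf{0}_{n+1}^\intercal & 1\end{pmatrix}
=\begin{pmatrix} U_n & -U_n\cdot U_{n+1}^{-1}[i,n+1]_{0\le i\le n}\\ \mathbf{0}_{n+1}^\intercal & 1\end{pmatrix},
\]
so that the claim reduces to three identifications on the four blocks of $U_{n+1}$.

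The bottom row of $U_{n+1}$ is $(0,\ldots,0,1)$ by Corollary~\ref{Un_unitupper}, handling the lower two blocks at once. For the top-left block I would argue combinatorially that $U_{n+1}[i,j]=U_n[i,j]$ for $0\le i,j\le n$. The key observation is condition~(iv) of Definition~\ref{U_Path} applied to the final step of a $\mathcal{U}^{0}_{n+1}$-path: a rise step from $x=n$ to $x=n+1$ requires $y_n\ge n$, so if the terminal $y$-coordinate satisfies $j\le n$, then the last step is forced to be horizontal. Thus every $p\in\mathcal{P}^{U_{n+1}}_{i,j}$ with $j\le n$ decomposes uniquely as a $\mathcal{U}^{0}_{n}$-path $p'$ followed by the weight-$1$ horizontal step $[(n,j),(n+1,j)]$, giving a weight-preserving bijection $\mathcal{P}^{U_{n+1}}_{i,j}\leftrightarrow\mathcal{P}^{U_n}_{i,j}$.

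Finally, for the top-right block I would use the identity $U_{n+1}U_{n+1}^{-1}=I_{n+2}$ in block form. Since the argument just given also shows $U_{n+1}^{-1}[i,j]_{0\le i,j\le n}=U_n^{-1}$ (one expects the inverse to behave the same way, and this will be verified in the next section), writing
\[
\begin{pmatrix} U_n & U_{n+1}[i,n+1]_{0\le i\le n}\\ \mathbf{0}_{n+1}^\intercal & 1\end{pmatrix}
\begin{pmatrix} U_n^{-1} & U_{n+1}^{-1}[i,n+1]_{0\le i\le n}\\ \mathbf{0}_{n+1}^\intercal & 1\end{pmatrix}
=\begin{pmatrix} I_{n+1} & \mathbf{0}_{n+1}\\ \mathbf{0}_{n+1}^\intercal & 1\end{pmatrix},
\]
and reading off the top-right block yields $U_n\cdot U_{n+1}^{-1}[i,n+1]_{0\le i\le n}+U_{n+1}[i,n+1]_{0\le i\le n}=\mathbf{0}_{n+1}$, i.e.\ $U_{n+1}[i,n+1]_{0\le i\le n}=-U_n\cdot U_{n+1}^{-1}[i,n+1]_{0\le i\le n}$, which matches the top-right block of the expanded product.

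The only subtle point is the terminal-step forcing in the combinatorial bijection; once that is in place everything else is linear algebra parallel to the $L_n$ case. No step involves an obstacle beyond checking that the asymmetry introduced by ``ending'' the paths (rather than ``starting'' them as for $L_n$) is exactly what makes condition~(iv) of Definition~\ref{U_Path} do the work that condition~(iv) of Definition~\ref{L_Path} did before.
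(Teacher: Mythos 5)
Your proposal is correct and follows exactly the route the paper intends: the paper omits this proof, stating only that it ``follows the same ideas as Proposition~\ref{recLn},'' and your adaptation is the right one --- in particular, you correctly identify that condition~(iv) of Definition~\ref{U_Path} forces the \emph{final} step of any $\mathcal{U}^{0}_{n+1}$-path ending at height $j\le n$ to be horizontal (the mirror of the forced initial horizontal step in the $L$ case), and the block-matrix manipulation of $U_{n+1}U_{n+1}^{-1}=I_{n+2}$ matches the paper's argument for $L$. No gaps.
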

Since the proof follows the same ideas as Proposition~\ref{recLn}, it will be omitted.\\
The following lemma sets the stage for the main result of this section. More specifically, we provide a closed-form formula to compute the entries of $L_n$ ane $U_n$, respectively.

\begin{Lemma}\label{wghts_dcrsng}
For all $n\in \mathbb{N}$ and $0\leq i,j\leq n$, let $p\in\mathcal{P}_{i,j}^{L_n}$ and $\pi_1,\pi_2\in p$ such that $\omega(\pi_1)=t_{y_1,s_1}$ and $\omega(\pi_2)=t_{y_2,s_2}$ where $\omega$ is defined as in Definition~\ref{Ln}. If $y_1<y_2$, then $s_1\geq s_2$.
\end{Lemma}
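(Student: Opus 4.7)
The plan is to read off the indices at which the edges $\pi_1$ and $\pi_2$ occur along the path, translate the weight formula of Definition~\ref{Ln} into a linear relation between these indices and the $y$-coordinates, and then use the monotonicity statement of Lemma~\ref{y_dcrsng} together with a counting argument to produce the required inequality.

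First I would observe that since horizontal edges all have weight $1$, the edges $\pi_1,\pi_2$ with non-trivial weights $t_{y_1,s_1},t_{y_2,s_2}$ must both be fall steps. Say $\pi_1$ starts at the point $(k_1,y_{k_1})$ and $\pi_2$ at the point $(k_2,y_{k_2})$, so $y_{k_1}=y_1$ and $y_{k_2}=y_2$. From the weight assignment in Definition~\ref{Ln}, $s_1=k_1+y_1-n$ and $s_2=k_2+y_2-n$, hence
$$s_1-s_2=(k_1-k_2)-(y_2-y_1).$$
So the lemma reduces to the single inequality $k_1-k_2\geq y_2-y_1$.

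Next I would apply Lemma~\ref{y_dcrsng}: because $y_1<y_2$ and the $y$-coordinates along any $\mathcal{L}^0_n$-path are non-increasing, the fall step $\pi_1$ must occur strictly later than $\pi_2$, so $k_1>k_2$. Immediately after the fall step $\pi_2$ the path is at height $y_2-1$ at time $k_2+1$, and at time $k_1$ it has height $y_1$. Between these two times each step decreases $y$ by at most one (horizontal steps preserve $y$, fall steps decrease it by exactly one, and no other moves are permitted), so the total drop $(y_2-1)-y_1$ cannot exceed the number of intervening steps $k_1-(k_2+1)$. This yields $y_2-y_1\leq k_1-k_2$, which plugged into the identity above gives $s_1\geq s_2$, completing the proof.

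The whole argument is essentially bookkeeping, so there is no real obstacle. The only point requiring minor care is matching the index convention of Definition~\ref{Ln}: the second subscript of the weight $t_{j,i+j-n}$ refers to the position $i$ (the $x$-coordinate of the starting point of the fall step), while the first subscript is the $y$-coordinate at which the fall step begins. Once this is written out cleanly, the monotonicity of $y$ and the at-most-one-drop-per-step constraint make the inequality immediate.
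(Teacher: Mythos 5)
Your proof is correct and follows essentially the same route as the paper's: express $s_1,s_2$ via the weight formula as $s_\ell = k_\ell + y_\ell - n$, use Lemma~\ref{y_dcrsng} to get $k_1 > k_2$, and bound $y_2 - y_1$ by $k_1 - k_2$ using the fact that each step drops $y$ by at most one. Your step-counting justification of that last inequality is in fact slightly more explicit than the paper's, which simply declares it obvious.
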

\begin{proof}
From Definition~\ref{Ln}, we conclude that there are $x_1,x_2\in \{0,\ldots,n-1\}$ such that $\pi_1=[(m+x_1,y_1),(m+x_1+1,y_1-1)]$ and $\pi_2=[(m+x_2,y_2),(m+x_2+1,y_2-1)]$. It follows from the same definition that $s_1=x_1+y_1-n$ and $s_2=x_2+y_2-n$. Consequently, $s_1-s_2=(x_1-x_2)+(y_1-y_2)$. Given that $y_1<y_2$ and from Lemma~\ref{y_dcrsng}, $x_1>x_2$, which means $x_1-x_2>0$ and $y_1-y_2<0$. Nevertheless, we know that each step $x$ increases by $1$ while $y$ decreases by $1$ or remains the same. It is obvious then that $x_1-x_2\geq -(y_1-y_2)$. In other words, $s_1-s_2\geq 0$.
\end{proof}
In the following theorem, we define $Q_{i,j}^I$, for $i>j$, to be the set of increasing sequences of length $i-j$, and if $\alpha\in Q_{i,j}^I$, then $0\leq\alpha_r\leq i-r$. Let $\varepsilon$ be the empty sequence, and extend the definition such that $Q_{i,i}^I=\{\varepsilon\}$, and $Q_{i,j}^I=\phi$, the empty set for $i<j$.
\begin{Theorem}\label{Ln_frmla}
If $L_n$ is defined as in definition~\ref{Ln}, then
\begin{equation}\label{Ln_formula}
L_n[i,j]\,=\,\sum_{\alpha\in Q_{i,j}^I}\prod_{r=j}^{i-1}t_{r+1,\alpha_{i-r}}
\end{equation}
\end{Theorem}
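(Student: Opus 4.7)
The plan is to establish a weight-preserving bijection between the set $\mathcal{P}^{L_n}_{i,j}$ and the index set $Q_{i,j}^I$. I would begin by encoding each path $p \in \mathcal{P}^{L_n}_{i,j}$ by the sequence of positions $k_1 < k_2 < \cdots < k_{i-j}$ at which its fall steps occur: there are exactly $i-j$ of them, since the height drops from $i$ to $j$ and the $y$-coordinate is weakly decreasing by Lemma~\ref{y_dcrsng}. Because the $s$-th fall carries the height from $i-s+1$ down to $i-s$, Definition~\ref{Ln} assigns it the weight $t_{i-s+1,\, k_s + (i-s+1) - n}$.

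The key change of variable is $\alpha_s := k_s - (n - i + s - 1)$. Condition~(4) of Definition~\ref{L_Path}, namely $y_{k_s} \geq n - k_s$, translates precisely to $\alpha_s \geq 0$. Strict increase of $(k_s)$ by integers yields $\alpha_{s+1} \geq \alpha_s$, so the sequence $(\alpha_s)$ is weakly increasing. The requirement that $i-j-s$ further falls still fit after position $k_s$ forces $k_s \leq n - 1 - (i-j-s)$, i.e.\ $\alpha_s \leq j$; combined with monotonicity this is equivalent to the bound $\alpha_s \leq i-s$ that appears in the definition of $Q_{i,j}^I$. Under the substitution the second subscript collapses: $k_s + (i - s + 1) - n = \alpha_s$, so the total weight of $p$ becomes
\[
\omega(p) \;=\; \prod_{s=1}^{i-j} t_{i-s+1,\, \alpha_s}.
\]

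Injectivity of $p \mapsto \alpha$ is immediate, since $k_s = \alpha_s + n - i + s - 1$ recovers the fall positions. For surjectivity, any $\alpha \in Q_{i,j}^I$ produces integers satisfying $0 \leq k_1 < k_2 < \cdots < k_{i-j} \leq n-1$ together with condition~(4), and hence comes from a well-defined path in $\mathcal{P}^{L_n}_{i,j}$. Summing weights over $\mathcal{P}^{L_n}_{i,j}$ and re-indexing by $r = i - s$ converts $\prod_{s=1}^{i-j} t_{i-s+1,\, \alpha_s}$ into $\prod_{r=j}^{i-1} t_{r+1,\, \alpha_{i-r}}$, which is the formula (\ref{Ln_formula}).

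The one delicate point I anticipate is the equivalence of the upper-bound conditions $\alpha_s \leq j$ (which is what the path picture naturally yields) and $\alpha_s \leq i - s$ (which the theorem states). These coincide only once $(\alpha_s)$ is known to be weakly increasing, so the monotonicity property and the bounds must be handled in tandem; Lemma~\ref{wghts_dcrsng} is essentially the assertion that this works, and invoking it cleanly is the one place where care is required. Everything else is index bookkeeping.
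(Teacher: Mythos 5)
Your proposal is correct and follows essentially the same route as the paper: a weight-preserving bijection between $\mathcal{P}^{L_n}_{i,j}$ and $Q_{i,j}^I$ obtained by recording the fall positions $k_1<\dots<k_{i-j}$ and substituting $\alpha_s = k_s + i - s + 1 - n$, which is exactly the paper's change of variables. The only gaps are cosmetic: the trivial cases $i\le j$ are not mentioned, and the ``delicate point'' you flag dissolves because $k_s\le n-1$ already gives $\alpha_s\le i-s$ directly.
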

\begin{proof}
If $i<j$, then $Q_{i,j}^I=\phi$. Thus, the sum is empty and $L_n[i,j]=0$.
If $i=j$, the sum is over the empty sequence $\varepsilon$ only. It follows that $\displaystyle L_n[i,j]=\prod_{r=j}^{i-1}t_{r+1,\varepsilon_{i-r}}=1$ since it is an empty product. These two results agree with Corollary~\ref{Ln_unitlower}.
Finally, we need to show that (\ref{Ln_formula}) holds for the case $i>j$. 
The proof relies on constructing a bijection between $Q_{i,j}^I$ and $\mathcal{P}_{i,j}^{L_n}$ such that if $\alpha\mapsto p$, then $\displaystyle\omega(p)=\prod_{r=j}^{i-1}t_{r+1,\alpha_{i-r}}$. Define $f(\alpha)=[(k,y_{_k})]_{0\leq k\leq n}$ such that
\[y_{_k}=\left\{\begin{array}{cl}
i   &   \mbox{, if $0\leq k\leq\alpha_1+n-i$}\\
i-r &   \mbox{, if $\alpha_r+n-(i-r)\leq k\leq\alpha_{r+1}+n-(i-r)$ for $r\in\{1,\ldots,i-j-1\}$}\\
j   &   \mbox{, if $\alpha_{i-j}+n-j\leq k\leq n$}
\end{array}\right.\]
First, we show $f$ is well defined. By definition of $Q_{i,j}^I$, we know that $0\leq\alpha_1\leq i-1$ and $\alpha_r\leq\alpha_{r+1}\leq i-(r+1)$. It follows that $0\leq n-i\leq\alpha_1+n-i\leq n-1$ and $\alpha_r+n-(i-r)\leq\alpha_{r+1}+n-(i-r)\leq n-1$ for $r\in\{1,\ldots,i-j-1\}$. By letting $r=i-j-1$, we get $\alpha_{i-j}+n-j\leq n$. Combining these inequalities yields,
\[0\leq\alpha_1+n-i<\alpha_1+n-(i-1)\leq\alpha_2+n-(i-1)<\alpha_2+n-(i-2)\leq\ldots\alpha_{i-j}+n-(j+1)<\alpha_{i-j}+n-j\leq n\]
which shows that by fixing $k$ there is a unique $y_{_k}$. It also proves that $y_0=i$, $y_n=j$, and if $k_1<k_2$, then $y_{_{k_1}}\geq y_{_{k_2}}$. It is easy to see from these that if $y_{_k}=s$ for $s\in\{j,\ldots,i\}$, then $y_{_{k+1}}\in\{s,s-1\}$. It remains to show that if $y_{_{k+1}}=y_{_k}-1$, then $y_{_k}\geq n-k$. Assume $y_{_{k+1}}=s$ for $s\in\{j,\ldots,i-1\}$. From the definition, we know that $k+1=\alpha_{i-s}+n-s$. Therefore, $n-k=\alpha_{i-s}+s+1$. However, $y_{_k}=s+1$ and $\alpha_{i-s}\geq 0$. As a consequence, $y_{_k}\geq n-k$. Now, we can conclude that $f(\alpha)\in\mathcal{P}_{i,j}^{L_n}$.\\
To show that $f$ is surjective, assume $p\in\mathcal{P}_{i,j}^{L_n}$ and $p=[(k,y_{_k})]$. Define $K$ to be the set of indices $k$ at which there is a fall step. More precisely, $K=\{k:k<n\text{ and }y_{_{k+1}}=y_{_k}-1\}$. Clearly, $K$ has $i-j$ elements, so call them $k_1,\ldots,k_{i-j}$ in ascending order. Now, define $\beta$ such that,
\begin{equation}\label{beta}
\beta_r=k_r+i-r+1-n.
\end{equation}
$k_r$ is strictly increasing, so $k_r-r$ is increasing. It follows that $\beta_r$ is increasing. Since $k_r<n$, $k_r+1-n\leq0$, and, consequently, $\beta_r\leq i-r$. Finally, we know from condition $(iii)$ in Definition~\ref{L_Path} that $k_r\geq n-y_{_{k_r}}$. Also, by definition, $y_{_{k_1}}=i$ and $y_{k_{r+1}}=y_{_{k_r}}-1$, which imply that $y_{_{k_r}}=i-(r-1)$. This implies that $k_r\geq n-i+r-1$, which implies directly that $\beta_r\geq0$. To sum up the results, we showed that $\beta$ is increasing and $0\leq\beta_r\leq i-r$. Therefore, $\beta\in Q_{i,j}^I$ and $f(\beta)$ is $p$.\\
To show that $f$ is injective, assume two distinct sequences $\alpha,\beta\in Q_{i,j}^I$. Define $D$ to be the set of indices at which they differ. More precisely, $D=\{1\leq r\leq i-j:\alpha_r\neq\beta_r\}$ which is clearly non-empty. Take $r_o$ to be the minimum of $D$, and assume $f(\alpha)=[(k,y_{_k}^\alpha)]$ and $f(\beta)=[(k,y_{_k}^\beta)]$. Since $\alpha_{r_o}\neq\beta_{r_o}$, assume, without loss of generality, that $\alpha_{r_o}<\beta_{r_o}$. Let $k_c$ be $\alpha_{r_o}+n-(i-r_o)$, so $k_c<\beta_{r_o}+n-(i-r_o)$. Therefore, $y^\alpha_{k_c}=i-r_o$ while $y^\beta_{k_c}=i-r_o-1$. Consequently, $f(\alpha)\neq f(\beta)$, and hence the injectivity of $f$.\\
Therefore, $f$ is a bijection between $Q_{i,j}^I$ and $\mathcal{P}_{i,j}^{L_n}$. It remains to show that $\displaystyle\omega(f(\alpha))=\prod_{r=j}^{i-1}t_{r+1,\alpha_{i-r}}$. Once more, consider the set $K=\{k:k<n\text{ and }y_{_{k+1}}=y_{_k}-1\}$. Let $\pi_k$ be the edge $[(k,y_{_k}),(k+1,y_{_k}+1)]$ in $f(\alpha)$, for a fixed $0\leq k<n$. It is clear from Definition~\ref{Ln} that
\[\omega(\pi_k)=\left\{\begin{array}{cl}
t_{y_{_k},k+y_{_k}-n} &   \text{, if }k\in K\\
1               &   \text{, if }k\notin K
\end{array}\right.\]
Since $\displaystyle\omega(f(\alpha))=\prod_{k=0}^{n-1}\omega(\pi_k)$, we deduce that $\displaystyle\omega(f(\alpha))=\prod_{k\in K}t_{y_{_k},k+y_{_k}-n}$. Using the order described on $K$, we rewrite the last formula as $\displaystyle\omega(f(\alpha))=\prod_{r=1}^{i-j}t_{y_{_{k_r}},k_r+y_{_{k_r}}-n}$. However, we have already proved that $y_{_{k_r}}=i-(r-1)$. Therefore, if we substitute the value of $y_{_{k_r}}$ and apply the transformation $s=i-r$, we reach the formula $\displaystyle\omega(f(\alpha))=\prod_{s=j}^{i-1}t_{s+1,k_{i-s}+s+1-n}$. Using (\ref{beta}), we know that $\alpha_{i-s}=k_{i-s}+i-((i-s)-1)-n=k_{i-s}+s+1-n$. Thus, substituting back in the last formula and replacing $s$ with $r$, we conclude that $\displaystyle\omega(f(\alpha))=\prod_{r=j}^{i-1}t_{r+1,\alpha_{i-r}}$. Now, we use Definition~\ref{Ln}, to write
\[L_n=\sum_{p\in\mathcal{P}_{i,j}^{L_n}}\omega(p)=\sum_{\alpha\in Q_{i,j}^I}\prod_{r=j}^{i-1}t_{r+1,\alpha_{i-r}}\]
\end{proof}

As the case of $L_n$, there is a closed-form formula for the entries of the matrix $U_n$ in terms of the $t_{i,j}$ parameters. This is highlighted in the following Theorem.

\begin{Theorem}\label{Un_frmla}
If $U_n$ is defined as in definition~\ref{Un}, then
\begin{equation}\label{Un_formula}
U_n[i,j]\,=\,\sum_{\alpha\in Q_{j,i}^I}\prod_{r=i}^{j-1}t_{\alpha_{j-r},r+1}
\end{equation}
\end{Theorem}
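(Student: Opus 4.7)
My plan is to mirror the proof of Theorem~\ref{Ln_frmla}, with rises playing the role of falls and non-decreasing height sequences replacing non-increasing ones. The cases $i>j$ and $i=j$ are immediate: when $i>j$, the set $Q_{j,i}^I$ is empty by the conventions preceding Theorem~\ref{Ln_frmla}, so the right-hand side is $0$, matching Corollary~\ref{Un_unitupper}; when $i=j$, the set is $\{\varepsilon\}$ and the empty product is $1$, matching the unit diagonal.

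For the main case $i<j$, I would construct an explicit weight-preserving bijection $f:Q_{j,i}^I\to\mathcal{P}_{i,j}^{U_n}$. Each $\alpha=(\alpha_1,\ldots,\alpha_{j-i})\in Q_{j,i}^I$ will encode the positions of the $j-i$ rises: the rise from height $s$ to height $s+1$, for $s=i,i+1,\ldots,j-1$, is placed at position $k_s = s - \alpha_{j-s}$, with all other steps horizontal. One then verifies that the defining conditions on $\alpha$ translate into the path conditions of Definition~\ref{U_Path}. Non-negativity $\alpha_{j-s}\ge 0$ yields $k_s\le s = y_{k_s}$, which is condition (iv); the upper bound $\alpha_{j-s}\le j-(j-s)=s$ yields $k_s\ge 0$; and weak monotonicity of $\alpha$ gives $k_{s+1}-k_s = 1+\alpha_{j-s}-\alpha_{j-s-1}\ge 1$, so the rise positions are strictly increasing and all lie in $\{0,\ldots,n-1\}$.

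Injectivity is clear since distinct sequences produce distinct tuples $(k_i,\ldots,k_{j-1})$ and hence distinct paths. For surjectivity, given $p\in\mathcal{P}_{i,j}^{U_n}$, I would read off the rise positions $k_i<k_{i+1}<\cdots<k_{j-1}$, where $k_s$ denotes the unique rise that ends at height $s+1$, and set $\alpha_{j-s}=s-k_s$, checking that the path axioms force this $\alpha$ into $Q_{j,i}^I$. The weight computation is then immediate: by Definition~\ref{Un}, the rise at position $k_s$ has weight $t_{y_{k_s}-k_s,\,y_{k_s}+1}=t_{s-k_s,\,s+1}=t_{\alpha_{j-s},\,s+1}$, and taking the product over $s=i,\ldots,j-1$ and renaming $s\to r$ yields $\prod_{r=i}^{j-1}t_{\alpha_{j-r},r+1}$, matching~(\ref{Un_formula}). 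The main obstacle is the index bookkeeping: the indices of $\alpha$ run in the opposite order to the chronological order of rises ($\alpha_1$ controls the last rise, $\alpha_{j-i}$ the first), so one must verify carefully that the substitution $k_s=s-\alpha_{j-s}$ respects monotonicity and the prescribed bounds. Beyond this reindexing, the argument is a mechanical mirror of Theorem~\ref{Ln_frmla} and introduces no new ideas.
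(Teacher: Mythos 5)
Your proposal is correct and follows essentially the same route as the paper: the paper's own proof simply specifies the piecewise height function $y_k$ (whose rise positions are exactly your $k_s=s-\alpha_{j-s}$) and defers the rest to the argument of Theorem~\ref{Ln_frmla}, which is precisely the mirrored bijection you carry out. Your verification of conditions (i)--(iv) of Definition~\ref{U_Path}, the injectivity/surjectivity check, and the weight computation $t_{y_{k_s}-k_s,\,y_{k_s}+1}=t_{\alpha_{j-s},\,s+1}$ all match the intended argument.
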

\begin{Proof}
If we let
\[y_{_k}=\left\{\begin{array}{cl}
j   &   \mbox{, if $j-\alpha_1\leq k\leq n$}\\
j-r &   \mbox{, if $j-r-\alpha_{r+1}\leq k\leq j-r-\alpha_r$ for $r\in\{1,\ldots,j-i-1\}$}\\
i   &   \mbox{, if $0\leq k\leq i-\alpha_{j-i}$}
\end{array}\right.\]
then a similar argument to the one in the proof of Theorem~\ref{Ln_frmla} is used.
\end{Proof}

\section{The Inverse of a Totally Positive Matrix}\label{inv}
The definition of totally positive matrices introduced in Section~\ref{sec:intro} ensures such matrices are invertible. In Section~\ref{LDU}, we used the fact that every totally positive matrix has a unique LDU decomposition and provided a combinatorial description of the $L$, $D$, and $U$. If the inverses of the $L$, $D$ and $U$ factors are known to be $L^{-1}$, $D^{-1}$ and $U^{-1}$, respectively, then the inverse of the totally positive matrix can be computed as the product $U^{-1}D^{-1}L^{-1}$. Also, since the product of the $L$, $D$ and $U$ factors yields a matrix that is invertible, then each of the factors is also invertible. In this section, we find a combinatorial description of the inverse of each of these factors, similar to the one described in Section~\ref{LDU}.
 
We introduce a new weight matrix, which we call $L^{-1}$, that can be recovered from an $L-$type network whose weights are defined as follows:
\begin{subequations}\label{invLn}
\begin{equation}
\text{for }j\neq 0\text{, }\omega([(m+i,j),(m+i+1,j-1)])=-t_{j,n-i-1}
\end{equation}
\begin{equation}
\omega([(m+i,j),(m+i+1,j)])=1
\end{equation}
\end{subequations}
for all $0\leq i,j\leq n$. It follows from Corollary~\ref{Ln_unitlower} that $L^{-1}$ is unit lower triangular.\\
In this section, we will introduce a formula for the entries of $L^{-1}$; which we will use to show that $L^{-1}$ is indeed the inverse of $L_n$.

\begin{Lemma}\label{wghts_incrsng}
For all $n\in\mathbb{N}$ and $0\leq i,j\leq n$, let $p\in\mathcal{P}_{i,j}^{L_n}$ and $\pi_1,\pi_2\in p$ such that $\omega(\pi_1)=-t_{y_1,s_1}$ and $\omega(\pi_2)=-t_{y_2,s_2}$ where $\omega$ is defined as in (\ref{invLn}). If $y_1<y_2$, then $s_1<s_2$.
\end{Lemma}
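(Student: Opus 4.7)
The plan is to mimic the proof of Lemma~\ref{wghts_dcrsng}, adapting it to the new weight function~(\ref{invLn}). The structural hypothesis ($p$ is an $\mathcal{L}^0_n$-path) is unchanged, so Lemma~\ref{y_dcrsng} still applies; only the index assignment inside the subscript of $t_{\cdot,\cdot}$ has changed.

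First I would note that $\pi_1$ and $\pi_2$ must both be fall steps, since their weights are of the form $-t_{\cdot,\cdot}$ rather than $1$. Thus I can write $\pi_1=[(m+x_1,y_1),(m+x_1+1,y_1-1)]$ and $\pi_2=[(m+x_2,y_2),(m+x_2+1,y_2-1)]$ for some $x_1,x_2\in\{0,\ldots,n-1\}$. Comparing with~(\ref{invLn}) gives $s_1=n-x_1-1$ and $s_2=n-x_2-1$, hence
\[
s_1-s_2 \;=\; x_2-x_1.
\]

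Next I would invoke Lemma~\ref{y_dcrsng}. Since $y_{x_1}=y_1$ and $y_{x_2}=y_2$, the hypothesis $y_1<y_2$ combined with the contrapositive of Lemma~\ref{y_dcrsng} forces $x_1>x_2$ (otherwise $x_1\leq x_2$ would give $y_1=y_{x_1}\geq y_{x_2}=y_2$). Substituting this into the displayed equation yields $s_1-s_2=x_2-x_1<0$, i.e., $s_1<s_2$, which is exactly the desired conclusion.

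There is essentially no obstacle here: the proof is a direct transcription of the argument of Lemma~\ref{wghts_dcrsng} with the new formula $s=n-x-1$ in place of $s=x+y-n$, and the inequality flips accordingly (strict rather than weak, because the second coordinate no longer involves $y$ so no cancellation against $y_1-y_2$ occurs). The only mild care needed is to verify that $\pi_1,\pi_2$ are indeed fall steps and that Lemma~\ref{y_dcrsng} really does apply to paths in $\mathcal{P}_{i,j}^{L_n}$ regardless of which weight function is attached to the edges, which is immediate since that lemma is purely combinatorial.
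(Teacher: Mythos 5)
Your proof is correct and follows essentially the same route as the paper's: identify $\pi_1,\pi_2$ as fall steps at positions $x_1,x_2$, read off $s_1=n-x_1-1$ and $s_2=n-x_2-1$ from~(\ref{invLn}), and use Lemma~\ref{y_dcrsng} to get $x_1>x_2$ from $y_1<y_2$, whence $s_1-s_2=x_2-x_1<0$. Your added remarks (that the edges must be fall steps, and that Lemma~\ref{y_dcrsng} is weight-independent) are sensible clarifications but not a different argument.
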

\begin{proof}
From (\ref{invLn}), we conclude that there are $x_1,x_2\in \{0,\ldots,n-1\}$ such that $\pi_1=[(m+x_1,y_1),(m+x_1+1,y_1-1)]$ and $\pi_2=[(m+x_2,y_2),(m+x_2+1,y_2-1)]$. It follows from the same definition that $s_1=n-x_1-1$ and $s_2=n-x_2-1$. Consequently, $s_1-s_2=x_2-x_1$. Given that $y_1<y_2$ and from Lemma~\ref{y_dcrsng}, $x_1>x_2$, which means $x_2-x_1<0$. Therefore, $s_1-s_2<0$.
\end{proof}
 In analogy to Theorem~\ref{Ln_frmla}, we give a closed-form formula for $L^{-1}$. To prove this, define $Q_{i,j}^{SD}$, for $i>j$, to be the set of strictly decreasing sequences of length $i-j$, and if $\alpha\in Q_{i,j}^{SD}$, then $0\leq\alpha_r\leq i-r$. Also, we extend the definition such that $Q_{i,i}^{SD}=\{\varepsilon\}$, the empty sequence for $i=j$ and $Q_{i,j}^{SD}=\phi$, the empty set for $i<j$.
\begin{Theorem}\label{invLn_frmla}
If $L^{-1}$ is the weight martix of the $L-$type network of order $n$ whose weights are defined as in (\ref{invLn}), then
\begin{equation}\label{invLn_formula}
L^{-1}[i,j]\,=\,(-1)^{i-j}\sum_{\alpha\in Q_{i,j}^{SD}}\prod_{s=j}^{i-1}t_{s+1,\alpha_{i-s}}
\end{equation}
\end{Theorem}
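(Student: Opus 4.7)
The plan is to mirror the proof of Theorem~\ref{Ln_frmla} almost line by line, since the underlying set of paths $\mathcal{P}_{i,j}^{L_n}$ is unchanged and only the edge weights have been replaced via (\ref{invLn}). The degenerate cases dispose of themselves: if $i<j$, both $Q_{i,j}^{SD}$ and $\mathcal{P}_{i,j}^{L_n}$ are empty, so both sides of (\ref{invLn_formula}) vanish; if $i=j$, the only path is the all-horizontal one and $Q_{i,i}^{SD}=\{\varepsilon\}$, giving $1$ on both sides and matching the unit-lower-triangular structure guaranteed by Corollary~\ref{Ln_unitlower}. This leaves the genuine case $i>j$ to prove by constructing a weight-preserving bijection $f\colon Q_{i,j}^{SD}\to\mathcal{P}_{i,j}^{L_n}$.

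For the bijection, I would send $\alpha\in Q_{i,j}^{SD}$ to the path $f(\alpha)$ whose fall steps occur exactly at abscissae $k_r = n-\alpha_r-1$ for $r=1,\ldots,i-j$, with horizontal edges everywhere else; equivalently, one can write $f(\alpha) = [(k,y_{_k})]_{0\leq k\leq n}$ piecewise, using the intervals $[n-\alpha_r-1,\,n-\alpha_{r+1}-1)$ on which $y_{_k}=i-(r-1)$, analogously to the piecewise description used in the proof of Theorem~\ref{Ln_frmla}. Because $\alpha$ is \emph{strictly} decreasing with $0\leq\alpha_r\leq i-r$, the $k_r$ form a strictly increasing sequence in $\{0,\ldots,n-1\}$ satisfying $k_r\geq n-i+r-1$, which (since no rise steps occur, so $y_{_{k_r}}=i-(r-1)$) is exactly condition (iv) of Definition~\ref{L_Path}. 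Inverting the construction is routine: any $p\in\mathcal{P}_{i,j}^{L_n}$ has precisely $i-j$ fall steps at abscissae $k_1<\cdots<k_{i-j}$, and setting $\alpha_r:=n-k_r-1$ produces a strictly decreasing sequence in $Q_{i,j}^{SD}$ whose image under $f$ is $p$.

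The weight computation is then immediate: each fall step contributes $-t_{y_{_{k_r}},\,n-k_r-1} = -t_{i-r+1,\,\alpha_r}$ under the sign-flipped weights (\ref{invLn}), while horizontal edges contribute $1$, so
\[
\omega(f(\alpha)) \;=\; \prod_{r=1}^{i-j}\bigl(-t_{i-r+1,\,\alpha_r}\bigr) \;=\; (-1)^{i-j}\prod_{s=j}^{i-1} t_{s+1,\,\alpha_{i-s}},
\]
after the reindexing $s=i-r$. Summing over $\alpha\in Q_{i,j}^{SD}$ and applying Definition~\ref{Ln} (with the new weights) yields (\ref{invLn_formula}).

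The only real subtlety — and the step I would flag as the main obstacle to verify carefully — is the \emph{strictness} in $Q_{i,j}^{SD}$, which is the essential qualitative difference from Theorem~\ref{Ln_frmla}. This strictness is forced by the new weight assignment: the second index $n-k_r-1$ decreases by at least $1$ whenever $k_r$ advances, so distinct fall steps produce distinct second indices in strictly decreasing order as the height drops. This is precisely the content of Lemma~\ref{wghts_incrsng}, which plays the role that Lemma~\ref{wghts_dcrsng} played in the original proof, and it is what upgrades the weakly increasing sequences of $Q_{i,j}^I$ to the strictly decreasing sequences of $Q_{i,j}^{SD}$.
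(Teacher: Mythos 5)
Your proposal is correct and is essentially the paper's own proof: the paper defines exactly the same bijection (its piecewise formula for $y_{_k}$ places the fall steps at abscissae $k_r=n-\alpha_r-1$, as in your construction) and then defers to the argument of Theorem~\ref{Ln_frmla}, which you have simply carried out in full, including the correct identification of Lemma~\ref{wghts_incrsng} as the source of the strict decrease. The only blemish is a harmless off-by-one in your parenthetical interval description (the height equals $i-(r-1)$ on $(k_{r-1},k_r]$, i.e.\ for $n-\alpha_{r-1}\leq k\leq n-\alpha_r-1$, not on the interval you wrote); your primary description via fall-step abscissae and the ensuing weight computation are correct.
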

\begin{proof}
If we let
\[y_{_k}=\left\{\begin{array}{cl}
i   &   \mbox{, if $0\leq k\leq n-\alpha_1-1$}\\
i-r &   \mbox{, if $n-\alpha_r\leq k\leq n-\alpha_{r+1}-1$ for $r\in\{1,\ldots,i-j-1\}$}\\
j   &   \mbox{, if $n-\alpha_{i-j}\leq k\leq n$}
\end{array}\right.\]
then a similar argument to the one in the proof of Theorem~\ref{Ln_frmla} is used.
\end{proof}
We observe that $L^{-1}$ shares quite similar characteristics as the ones of the lower triangular matrix $L_n$ presented in Section~\ref{LDU}. It is not surprising to see that $L^{-1}$, the way defined, is the inverse of $L_n$ defined in Definition~\ref{Ln}.
\begin{Theorem}\label{Ln_invLn}
If $L^{-1}$ is the weight martix of the $L-$type network of order $n$ whose weights are defined as in (\ref{invLn}), then $L_nL^{-1}\,=\,I_{n+1}$, the $(n+1)\times(n+1)$ identity matrix.
\end{Theorem}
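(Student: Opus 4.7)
The approach is to verify the identity entrywise. Both $L_n$ and $L^{-1}$ are unit lower triangular (by Corollary~\ref{Ln_unitlower} and its direct analogue for $L^{-1}$), so $L_nL^{-1}$ is automatically unit lower triangular; in particular the entries on and above the diagonal already equal those of $I_{n+1}$. The content of the theorem therefore reduces to showing that $(L_nL^{-1})[i,j]=0$ whenever $i>j$.

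For such $(i,j)$, I would use the closed-form expressions in Theorems~\ref{Ln_frmla} and~\ref{invLn_frmla} to expand
\[
(L_nL^{-1})[i,j] \;=\; \sum_{k=j}^{i} L_n[i,k]\,L^{-1}[k,j] \;=\; \sum_{k=j}^{i} (-1)^{k-j}\sum_{\alpha \in Q_{i,k}^{I}}\,\sum_{\beta \in Q_{k,j}^{SD}} \prod_{r=k}^{i-1} t_{r+1,\alpha_{i-r}}\prod_{s=j}^{k-1} t_{s+1,\beta_{k-s}}.
\]
A short reindexing (substituting $u=i-r$ and $v=k-s$ in the two products and then concatenating $\alpha$ of length $i-k$ with $\beta$ of length $k-j$ into a single sequence $\gamma$ of length $i-j$) collapses the double product to the single product $\prod_{r=1}^{i-j} t_{i-r+1,\gamma_r}$, still with $0\leq\gamma_r\leq i-r$. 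Writing $m=i-k$ for the length of the $\alpha$-piece, the sum becomes
\[
(L_nL^{-1})[i,j] \;=\; \sum_{(m,\gamma)} (-1)^{(i-j)-m}\prod_{r=1}^{i-j} t_{i-r+1,\gamma_r},
\]
ranging over pairs with $0\leq m\leq i-j$, $\gamma_1\leq\cdots\leq\gamma_m$, and $\gamma_{m+1}>\cdots>\gamma_{i-j}$.

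The core of the argument is a sign-reversing, fixed-point-free involution $\phi$ on these valid pairs that preserves $\gamma$ and shifts $m$ by $\pm 1$:
\[
\phi(m,\gamma)\;=\;\begin{cases}(m+1,\gamma) & \text{if } m=0,\text{ or } 0<m<i-j \text{ with } \gamma_m\leq\gamma_{m+1},\\(m-1,\gamma) & \text{if } m=i-j,\text{ or } 0<m<i-j \text{ with } \gamma_m>\gamma_{m+1}.\end{cases}
\]
That $\phi$ is well-defined and self-inverse relies on two facts built into the validity of $(m,\gamma)$: strict decrease on $[m+1,i-j]$ forces $\gamma_{m+1}>\gamma_{m+2}$, so $(m+1,\gamma)$ swings back to $(m,\gamma)$; and weak increase on $[1,m]$ forces $\gamma_{m-1}\leq\gamma_m$, so $(m-1,\gamma)$ swings back. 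Each pair $\{(m,\gamma),\phi(m,\gamma)\}$ shares the monomial $\prod_r t_{i-r+1,\gamma_r}$ but contributes with opposite signs, so everything cancels and the sum vanishes.

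The step I expect to demand the most care is the bookkeeping at the boundary $m\in\{0,\,i-j\}$: one needs to check that whenever $(0,\gamma)$ (respectively $(i-j,\gamma)$) is valid, the image $(1,\gamma)$ (respectively $(i-j-1,\gamma)$) is also valid and that $\phi$ returns to the original. These checks reduce to the strict-decrease/weak-increase conditions defining $Q_{i,j}^{SD}$ and $Q_{i,j}^{I}$, but the full verification calls for a short case analysis, particularly when $i-j=1$ so that $0$ and $i-j$ are adjacent.
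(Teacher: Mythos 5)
Your proposal is correct and is essentially the paper's own argument: the paper also reduces to the strictly-lower-triangular entries, expands via Theorems~\ref{Ln_frmla} and~\ref{invLn_frmla}, and cancels terms in pairs by moving the last entry of $\alpha$ across the boundary into $\beta$ under the dichotomy $\alpha_{i-k}\leq\beta_1$ versus $\alpha_{i-k}>\beta_1$ (its $T_k^{>}=-T_{k+1}^{\leq}$ telescoping is exactly your sign-reversing involution on $(m,\gamma)$, with $m=i-k$). Your repackaging as a single involution on concatenated sequences $\gamma$, where the bounds $0\leq\gamma_r\leq i-r$ are independent of $m$, makes the well-definedness check slightly cleaner, but the underlying bijection is identical.
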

\begin{proof}
From Corollary~\ref{Ln_unitlower}, we know that both $L_n$ and $L^{-1}$ are unit lower triangular, and thus, their product is unit lower triangular as well. It remains to show that the dot product of row $i$ of $L_n$ with column $j$ of $L^{-1}$ is $0$, whenever $i>j$. To show that, we let $A_n=L_n L^{-1}$ and expand the product as $A_n[i,j]=\sum_{k=0}^n{L_n[i,k]L^{-1}[k,j]}$ which can be simplified, using Theorems~\ref{Ln_frmla} and~\ref{invLn_frmla}, as
\begin{equation}\label{expansion}
A_n[i,j]=\sum_{k=j}^i T_k\text{ , where }T_k=(-1)^{k-j}\sum_{\alpha\in Q_{i,k}^I}\sum_{\beta\in Q_{k,j}^{SD}}\prod_{r\in\{k\ldots i-1\}}t_{r,\alpha_{i-r}}\prod_{s\in\{j\ldots k-1\}}t_{s,\beta_{k-s}}
\end{equation}
Here we observe that if $k<j$ (or $k>i$), then $L^{-1}[k,j]=0$ (or $L_n[i,k]=0$) by lemma~\ref{Ln_unitlower}. Then, we define two more sums, $T_k^\leq$ and $T_k^>$ as follows,
\begin{subequations}\label{sum_parts}
\begin{equation}
T_k^\leq=\left\{\begin{array}{cl}
\displaystyle(-1)^{k-j}\sum_{\alpha\in Q_{i,k}^I}\sum_{\substack{\beta\in Q_{k,j}^{SD}\\\alpha_{i-k}\leq\beta_1}}\prod_{r\in\{k\ldots i-1\}}t_{r,\alpha_{i-r}}\prod_{s\in\{j\ldots k-1\}}t_{s,\beta_{k-s}}
&  \text{, if } j<k<i\\
T_k & \text{, if } k=i\\
0   & \text{, if } k=j
\end{array}\right.
\end{equation}
\begin{equation}
T_k^>=\left\{\begin{array}{cl}
\displaystyle(-1)^{k-j}\sum_{\alpha\in Q_{i,k}^I}\sum_{\substack{\beta\in Q_{k,j}^{SD}\\\alpha_{i-k}>\beta_1}}\prod_{r\in\{k\ldots i-1\}}t_{r,\alpha_{i-r}}\prod_{s\in\{j\ldots k-1\}}t_{s,\beta_{k-s}}
&  \text{, if } j<k<i\\
0   & \text{, if } k=i\\
T_k & \text{, if } k=j
\end{array}\right.
\end{equation}
\end{subequations}
Clearly, $T_k=T_k^\leq+T_k^>$, where $T_k^\leq$ and $T_k^>$ have no common terms. It follows from (\ref{expansion}) that $\displaystyle A_n[i,j]=\sum_{k=j}^i(T_k^\leq+T_k^>)$. Rearranging the terms, the sum becomes $\displaystyle A_n[i,j]=T_j^\leq+\sum_{k=j}^{i-1}(T_k^>+T_{k+1}^\leq)+T_i^>$. From definition (\ref{sum_parts}), $T_j^\leq=T_i^>=0$. Thus, $\displaystyle A_n[i,j]=\sum_{k=j}^{i-1}(T_k^>+T_{k+1}^\leq)$. We will now show that $T_k^>=-T_{k+1}^\leq$ for all $j\leq k<i$ so that $A_n[i,j]=0$ as desired.\\
Since $\alpha_{i-k}>\beta_1$, define new sequences $\alpha'$ and $\beta'$ such that $\alpha'_r=\alpha_r$ for $r=1\ldots i-k-1$, $\beta'_1=\alpha_{i-k}$, and $\beta'_r=\beta_{r-1}$ for $r=2\ldots k+1-j$. Since $\alpha$ is increasing, $\alpha'_{i-k-1}\leq\beta'_1$.Therefore, $T_k^>$ can be rewritten as
\[T_k^>=(-1)^{k-j}\sum_{\alpha'\in Q_{i,k+1}^I}\sum_{\substack{\beta'\in Q_{k+1,j}^{SD}\\\alpha'_{i-k-1}\leq\beta'_1}}\prod_{r\in\{k+1\ldots i-1\}}t_{r,\alpha'_{i-r}}\prod_{s\in\{j\ldots k\}}t_{s,\beta'_{k+1-s}}\]
which differs from $T_{k+1}^\leq$ by a factor of $-1$. Therefore, $A_n=I_{n+1}$.
\end{proof}
Then, similar to the case of $L_2$, we can now present an example (in Figure~\ref{fig:invLexample}) of an $L-$type network of order $2$ with weights defined as in (\ref{invLn}), and the weight matrix of that network, namely $L_2^{-1}$.
\begin{figure}[ht]
\begin{subfigure}{0.35\textwidth}
\centering
\begin{tikzpicture}
\draw[black, thick]   (0,0)node[anchor=east]{$0$}--(4,0)node[anchor=west]{$0$}--(3.5,0.25)node[anchor=south]{$-t_{1,0}$}--(1.5,1.25)node[anchor=south]{$-t_{2,1}$}--(0,2)node[anchor=east]{$2$}--(4,2)node[anchor=west]{$2$};
\draw[black, thick]   (0,1)node[anchor=east]{$1$}--(4,1)node[anchor=west]{$1$}--(3.5,1.25)node[anchor=south]{$-t_{2,0}$}--(2,2);
\end{tikzpicture}
\caption{$L-$type network of order $2$}\label{fig:invLnet2}
\end{subfigure}
\begin{subfigure}{0.35\textwidth}
\centering
$\left(\begin{array}{ccc}
1               &           0           &   0\\
-t_{1,0}        &           1           &   0\\
t_{2,1}t_{1,0}  &   -(t_{2,0}+t_{2,1})  &   1
\end{array}\right)$
\caption{The weight matrix $L_2^{-1}$}\label{fig:invL2}
\end{subfigure}
\caption{Description of $L_2^{-1}$}\label{fig:invLexample}
\end{figure}
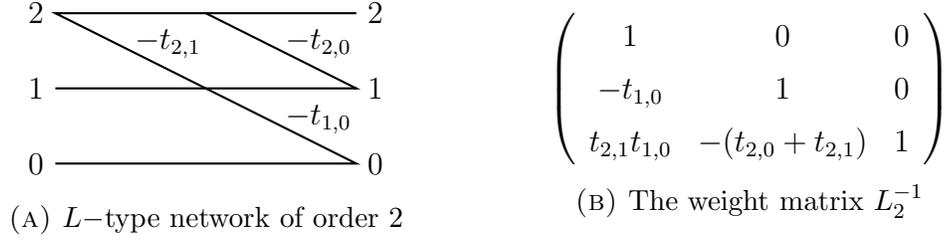

Similarly, we can define the inverse of $D_n$. All the entries on the diagonal of $D_n$ are positive, so it is invertible. The inverse, namely $D_n^{-1}$ is the diagonal matrix whose diagonal entries are the reciprocals of diagonal elements of $D_n$. Combinatorially, $D_n^{-1}$ can be recovered from a $D-$type network of order $n$ where the weights are defined as \[\omega([(0,i),(1,i)])=\frac{1}{t_{i,i}}\] for $i=0\ldots n$. The proofs that the combinatorial description results in a diagonal matrix $D_n^{-1}$ is similar to the proof of Lemma~\ref{Dn_diagonal}, so it will be skipped. The general form of $D_n^{-1}$ is
\[D_n^{-1}\,=\,\left(\begin{array}{cccc}
\frac{1}{t_{0,0}}   &   0                   &   \cdots  &   0\\
0                   &   \frac{1}{t_{1,1}}   &   \cdots  &   0\\
\vdots              &                       &   \ddots  &   \vdots\\
0                   &   0                   &   \cdots  &   \frac{1}{t_{n,n}}
\end{array}\right)\]

To find the inverse of $U_n$, we will mimic the same approach as the one for defining $L_n^{-1}$. We will define $U^{-1}$ to be the weight matrix of a $U-$type network whose weights are defined as follows:
\begin{subequations}\label{invUn}
\begin{equation}
\text{for }j\neq 0\text{, }\omega([(m+i,j-1),(m+i+1,j)])=-t_{i,j}
\end{equation}
\begin{equation}
\omega([(m+i,j),(m+i+1,j)])=1
\end{equation}
\end{subequations}
for all $0\leq i,j\leq n$. It follows from Corollary~\ref{Un_unitupper} that $U^{-1}$ is unit upper triangular.\\
Regarding $U^{-1}$, we obtain the following results.
\begin{Theorem}\label{invUn_frmla}
If $U^{-1}$ is the weight matrix of the $U-$type network of order $n$ whose weights are defined as in (\ref{invUn}), then
\begin{equation}\label{invUn_formula}
U^{-1}[i,j]\,=\,(-1)^{i-j}\sum_{\alpha\in Q_{j,i}^{SD}}\prod_{s=i}^{j-1}t_{\alpha_{j-s},s+1}
\end{equation}
\end{Theorem}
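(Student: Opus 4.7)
The plan is to mimic the proof of Theorem~\ref{invLn_frmla}, adapting it to the $U$-type orientation and the new weights in (\ref{invUn}). The boundary cases are disposed of as usual: $Q_{j,i}^{SD} = \emptyset$ when $i > j$ makes the right-hand side of (\ref{invUn_formula}) equal to $0$, consistent with $U^{-1}$ being unit upper triangular via Corollary~\ref{Un_unitupper}; when $i = j$, the only term is the empty product $1$.

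For $i < j$ I would build an explicit bijection $f : Q_{j,i}^{SD} \to \mathcal{P}_{i,j}^{U_n}$, with $f(\alpha)$ given by
\[y_{k} \;=\; \begin{cases} i & \text{if } 0 \leq k \leq \alpha_{j-i},\\ i+r & \text{if } \alpha_{j-i-r+1}+1 \leq k \leq \alpha_{j-i-r},\ r \in \{1,\ldots,j-i-1\},\\ j & \text{if } \alpha_1+1 \leq k \leq n. \end{cases}\]
Strict decrease of $\alpha$ together with $0 \leq \alpha_r \leq j-r$ makes the three cases a disjoint cover of $\{0,\ldots,n\}$, giving $y_0 = i$, $y_n = j$, unit rises at each transition between cases, and $y_k \geq k$ whenever $y_{k+1} = y_k + 1$; the last condition follows from $\alpha_r \leq j-r$. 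For surjectivity, one reads off the rise positions $k_1 < \cdots < k_{j-i}$ of a given path and sets $\alpha_r = k_{j-i-r+1}$, which is strictly decreasing by the strict increase of the $k_r$'s, and satisfies $\alpha_r \leq j-r$ by condition~(iv) of Definition~\ref{U_Path} at $k_{j-i-r+1}$, where the height equals $j-r$. Injectivity follows from the first-index-of-disagreement argument used in the proof of Theorem~\ref{Ln_frmla}.

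The weight computation is the main arithmetic step: the $r$-th rise edge of $f(\alpha)$ starts at $(k_r, i+r-1)$ with $k_r = \alpha_{j-i-r+1}$, and contributes the factor $-t_{k_r, i+r}$ by (\ref{invUn}). Thus
\[\omega(f(\alpha)) \;=\; \prod_{r=1}^{j-i}(-t_{\alpha_{j-i-r+1}, i+r}) \;=\; (-1)^{j-i}\prod_{r=1}^{j-i} t_{\alpha_{j-i-r+1}, i+r},\]
and the reindexing $s = i + r - 1$, so that $j - i - r + 1 = j - s$, rewrites this as $(-1)^{i-j}\prod_{s=i}^{j-1} t_{\alpha_{j-s}, s+1}$. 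Summing over $\alpha \in Q_{j,i}^{SD}$ and invoking Definition~\ref{Un} with the weights from (\ref{invUn}) yields (\ref{invUn_formula}). The main obstacle, and the main source of potential error, is that the correct bijection sends $\alpha_r$ to $k_{j-i-r+1}$ (the reverse-ordered list of rise positions) rather than to $n - k_r - 1$ as in Theorem~\ref{invLn_frmla}; this reversal is forced by the difference between the weight patterns $-t_{j, n-i-1}$ in (\ref{invLn}) and $-t_{i, j}$ in (\ref{invUn}), and getting it right is the only place care is needed.
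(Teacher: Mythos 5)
Your proposal is correct and takes essentially the same route as the paper: the paper's (very terse) proof gives exactly the same piecewise definition of $y_k$ (your indexing $y_k=i+r$ on $[\alpha_{j-i-r+1}+1,\alpha_{j-i-r}]$ is the paper's $y_k=j-r'$ on $[\alpha_{r'+1}+1,\alpha_{r'}]$ after $r'=j-i-r$, and the paper's ``$\alpha_{i-j}$'' is a typo for $\alpha_{j-i}$) and then defers to the argument of Theorem~\ref{Ln_frmla}. You have simply supplied the verification details --- well-definedness via condition (iv), surjectivity via $\alpha_r=k_{j-i-r+1}$, and the sign/reindexing computation --- that the paper omits, and they all check out.
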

\begin{Proof}
If we let
\[y_{_k}=\left\{\begin{array}{cl}
j   &   \mbox{, if $\alpha_1+1\leq k\leq n$}\\
j-r &   \mbox{, if $\alpha_{r+1}+1\leq k\leq\alpha_r$ for $r\in\{1,\ldots,j-i-1\}$}\\
i   &   \mbox{, if $0\leq k\leq\alpha_{i-j}$}
\end{array}\right.\]
then a similar argument to the one in the proof of Theorem~\ref{Ln_frmla} is used.
\end{Proof}
\begin{Theorem}\label{Un_invUn}
If $U^{-1}$ is the weight matrix of the $U-$type network of order $n$ whose weights are defined as in (\ref{invUn}), then $U_nU^{-1}\,=\,I_{n+1}$, the $(n+1)\times(n+1)$ identity matrix.
\end{Theorem}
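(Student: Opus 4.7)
The plan is to mirror the proof of Theorem~\ref{Ln_invLn}, transposing the setup from the lower-triangular to the upper-triangular case. By Corollary~\ref{Un_unitupper}, both $U_n$ and $U^{-1}$ are unit upper triangular, so their product is also unit upper triangular; this settles the diagonal entries (all $1$) and the entries strictly below the diagonal (all $0$). The only content is to show that $(U_n U^{-1})[i,j] = 0$ whenever $i < j$.

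For $i < j$, I would expand
\[
A_n[i,j] = \sum_{k=i}^{j} U_n[i,k]\, U^{-1}[k,j]
\]
and substitute Theorems~\ref{Un_frmla} and~\ref{invUn_frmla} to obtain $A_n[i,j] = \sum_{k=i}^{j} T_k$, where each $T_k$ is a signed double sum over pairs $(\alpha,\beta) \in Q_{k,i}^I \times Q_{j,k}^{SD}$ carrying sign $(-1)^{k-j}$ and weight $\prod_{r=i}^{k-1} t_{\alpha_{k-r},r+1} \prod_{s=k}^{j-1} t_{\beta_{j-s},s+1}$. Reading this monomial left-to-right (by increasing second index of $t$), the $\alpha$-block ends with $\alpha_1$ at position $k$ and the $\beta$-block begins with $\beta_{j-k}$ at position $k+1$, so the natural place to split is the comparison of $\alpha_1$ with $\beta_{j-k}$. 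I would define $T_k^{\geq}$ and $T_k^{<}$ in parallel with (\ref{sum_parts}) according to whether $\alpha_1 \geq \beta_{j-k}$ or $\alpha_1 < \beta_{j-k}$, with boundary conventions $T_i^{\geq} = T_i$, $T_i^{<} = 0$, $T_j^{\geq} = 0$, $T_j^{<} = T_j$. After re-grouping, $A_n[i,j] = \sum_{k=i}^{j-1}(T_k^{\geq} + T_{k+1}^{<})$, and it suffices to establish the sign-reversing identity $T_k^{\geq} = -T_{k+1}^{<}$.

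Given $(\alpha,\beta)$ in $T_k^{\geq}$, I would move the tail of $\beta$ to the head of $\alpha$: set $\alpha' = (\beta_{j-k}, \alpha_1, \ldots, \alpha_{k-i})$ and $\beta' = (\beta_1, \ldots, \beta_{j-k-1})$. The hypothesis $\beta_{j-k} \leq \alpha_1$ makes $\alpha' \in Q_{k+1, i}^I$ weakly increasing, while the strict decrease of $\beta$ forces $\beta'_{j-k-1} = \beta_{j-k-1} > \beta_{j-k} = \alpha'_1$, placing $(\alpha',\beta')$ in $T_{k+1}^{<}$. A short bookkeeping check shows that the monomial is preserved --- the factor $t_{\beta_{j-k}, k+1}$ simply migrates from the $\beta$-product to the $\alpha'$-product --- while the sign flips from $(-1)^{k-j}$ to $(-1)^{(k+1)-j}$. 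The main obstacle is identifying the correct endpoint comparison: in Theorem~\ref{Ln_invLn} one compares $\alpha_{i-k}$ (tail of $\alpha$) with $\beta_1$ (head of $\beta$), whereas here the $U_n$-block appears on the left of the combined monomial rather than the right, so the comparison must be between $\alpha_1$ and $\beta_{j-k}$, and the merge direction is reversed; once this is recognized, the remaining verification is essentially identical to the $L$-case.
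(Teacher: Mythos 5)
Your proposal is correct and follows exactly the route the paper intends: the paper's own ``proof'' is the single sentence that one expands $U_nU^{-1}$ as in Theorem~\ref{Ln_invLn}, and you have carried out that expansion faithfully, with the right junction comparison ($\alpha_1$ at second index $k$ versus $\beta_{j-k}$ at second index $k+1$) and a sign-reversing merge that checks out, including the boundary conventions at $k=i$ and $k=j$. In fact you supply more detail than the paper does, but the argument is the same one.
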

Expanding the product of $U_nU^{-1}$ as we did in the proof of Theorem~\ref{Ln_invLn}, proves this theorem.\\

It is worth noting that all the results of this paper can be easily extended to totally nonnegative matrices if the principal minors are nonzero. This is equivalent to saying the weights of the $D-$network, namely $t_{i,i}$ cannot be $0$ for all $0\leq i\leq n$; which guarantee the invertibility and, by Theorem~\ref{uniqueLU}, the uniqueness of the $LU$ factorization with $L$ unit lower triangular. Therefore, it has a unique $LDU$ factorization with $L$ unit lower triangular, $D$ diagonal and $U$ unit upper triangular.


\begin{thebibliography}{MM}

\frenchspacing
\renewcommand{\baselinestretch}{1}

\bibitem{Ando}
T. Ando, Totally Positive Matrices, {\em Linear Algebra Appl.} 90 (1987), 165--219.

\bibitem{Canto}
R. Cant\'{o}, P. Koev, B. Ricarte, and A. M. Urbano
LDU Factorization of Nonsingular Totally Nonpositive Matrices
{\em Siam J. Matrix Analy. Appl.} (2) 167 (2008), no. 1, 53--94.

\bibitem{Cryer}
C. W. Cryer, The LU-Factorization of Totally Positive Matrices,
in {\em Linear Algebra and Its Applications.}, 7 (1973), 83--92  

\bibitem{Fallat}
S. M. Fallat and C. R. Johnson, {\em Totally Nonnegative Matrices}, Princeton University Press, 2000.

\bibitem{Fomin}
S. Fomin and A. Zelvinsky,
Total Positivity: Tests and Parametrizations, Springer-Verlag New York. Volume 22, Number 1, 2000.

\bibitem{GantKrein}
F. R. Gantmacher and M. G. Krein, Sur les matrices compl\`{e}tement non n\'{e}gatives et oscillatoires,
{\em Compositio Math.} 4 (1937), 445--476.

\bibitem{Gant}
F. R. Gantmacher, {\em The Theory of Matrices}, Chelsea, New York, Volume 7, 1959.

\bibitem{GascaMicch}
M. Gasca and C. A. Micchelli, Eds., Total Positivity and its Applications, {\em Math. Appl.} 359,
Kluwer Academic Publishers, Dordrecht, The Netherlands, 1996.

\bibitem{GascaPena_Nev}
M. Gasca and J. M. Pe\~{n}a, Total positivity and Neville elimination, {\em Linear Algebra Appl.}, 44
(1992), 25--44.

\bibitem{GascaPena_factor}
M. Gasca and J. M. Pe\~{n}a, On factorizations of totally positive matrices, in {\em Total Positivity and its Applications},
Kluwer Academic Publishers, Dordrecht, The Netherlands, 1996, 109--130.

\bibitem{Karlin}
S. Karlin, Total positivity, {\em Stanford University Press}, Stanford, CA, 1968.

\end{thebibliography}
\end{document}